\documentclass[a4paper,12pt]{article}

\usepackage[margin=2.5cm]{geometry} 
\usepackage{fleqn} 
  
\usepackage{amssymb}
\usepackage{amsmath}
\usepackage{amsfonts}
\usepackage{amsthm}

\usepackage{graphics}
\usepackage{graphicx}
\usepackage{epsfig}
\usepackage{subfigure}
\usepackage{epstopdf}

\usepackage{lmodern}
\usepackage{caption}


\def\r0{\mathcal{R}_0}
\newcommand{\R}{\mathbb{R}}


\newtheorem{prop}{Proposition}
\newtheorem{lemma}{Lemma}
\newtheorem{theorem}{Theorem}[section]
\newtheorem{definition}{Definition}
\newtheorem*{acknowledgements}{Acknowledgements}

\title{Stability switches induced by immune system boosting in an SIRS model with discrete and distributed delays}

\author{M.~V. Barbarossa \thanks{Institute for Applied Mathematics, University of Heidelberg, D-69120 Heidelberg, Im Neuenheimer Feld 205, Germany, \textit{barbarossa\@ uni-heidelberg.de}}
\, and 
M.~Polner \thanks{Bolyai Institute,  University of Szeged, H-6720 Szeged, Aradi v\'{e}rtan\'{u}k tere 1, Hungary,
 \textit{polner\@ math.u-szeged.hu}}\, and
G.~R\"ost \thanks{Bolyai Institute,
 University of Szeged, H-6720 Szeged, Aradi v\'{e}rtan\'{u}k tere 1, Hungary,
 \textit{rost\@ math.u-szeged.hu}}}

\begin{document}

\maketitle

\begin{abstract}
We consider an epidemiological model that includes waning and boosting of immunity. Assuming that repeated exposure to the pathogen fully restores immunity, we derive an SIRS-type model with discrete and distributed delays. First we prove usual results, namely that if the basic reproduction number, $\r0$, is less or equal than $1$, then the disease free equilibrium is globally asymptotically stable, whereas for $\r0>1$ the disease persists in the population. The interesting features of boosting appear with respect to the endemic equilibrium, which can go through multiple stability switches by changing the key model parameters. We construct two-parameter stability charts, showing that increasing the delay can stabilize the positive equilibrium. Increasing $\r0$, the endemic equilibrium can cross two distinct regions of instability, separated by Hopf-bifurcations. Our results show that the dynamics of infectious diseases with boosting of immunity can be more complex than most epidemiological models, and calls for careful mathematical analysis.\\
\ \\
\textit{KEYWORDS:} {Delay equations;\, distributed delay;\, stability;\, basic reproduction number;\, endemic equilibrium;\,
bifurcation;\, persistence}\\
\ \\
\textit{AMS Classification:} {34K08; 34K18; 34K20; 92D30}
\end{abstract}

\section{Motivation and background}
\label{sec:intro}
Classical approaches in mathematical epidemiology present a population divided into susceptibles (S), infectives (I) and recovered (R), and consider interactions and transitions among these compartments. Susceptibles are those hosts who either have not contracted the disease in the past or have lost immunity against the disease-causing pathogen. When a susceptible host gets in contact with an infective one, the pathogen can be transmitted from the infective to the susceptible and with a certain probability, the susceptible host becomes infective himself. After pathogen clearance, that is, when the infective host recovers, a population of memory cells remains in the body. In this way, the host remains immune to the pathogen for a certain time. In case of secondary infection, memory cells respond quickly inducing a boost in the immune system of the host who might show mild or no symptoms. Though persisting for long time after pathogen clearance, memory cells slowly decay, and in the long run recovered hosts could lose pathogen-specific immunity~\cite{Wodarz2007}. \\
\indent Waning immunity is possibly one of the factors which cause, in particular in highly developed regions, recurrent outbreaks of infectious diseases such as chickenpox and pertussis. On the other side, immune system boosting due to contact with infectives prolongs the time during which immune hosts are protected.\\
 \ \\
\noindent A general modeling framework for disease dynamics with waning immunity and immune system boosting in hosts was proposed in~\cite{MVBGR2014JoMB}. The paper introduces a hybrid system of equations in which the immune population is structured by the level of immunity, whereas the susceptible and the infective populations are non-structured. The mathematical model (M2) in~\cite{MVBGR2014JoMB} presents the special case in which immune system boosting restores the maximal immune status, that is, the same immunity level as those induced by natural infection. It was shown in~\cite{MVBGR2014JoMB} that such a special case yields a class of systems with one constant delay ($\tau>0$) and one distributed delay (nonzero only on a finite interval $[-\tau,0]$), where $\tau$ represents the duration of immunity after natural infection. 
In the present work we shall consider a generalization of such systems with constant and distributed delay. The model ingredients and the system of equations are introduced in Section~\ref{sec:model}. Results related to existence, uniqueness and non-negativity of solutions of the delay system are presented in Section~\ref{sec:exiunipos}. Equilibria and criteria for the persistence of the disease are studied in Section~\ref{sec:equilibria}. In Section~\ref{sec:nodelay} we consider the limit case $\tau=0$.  Stability switches and regions of stability in two parameter planes are computed numerically in Section~\ref{sec:stability}.

\section{The model}
\label{sec:model}
Throughout this paper we assume that the birth rate and the natural death rate are equal and constant ($d\geq 0$), and we neglect disease-induced deaths. Hence the total population ($N=S+R+I$) is constant over time and can be normalized, $N(t)\equiv 1$ for all $t\geq 0$.\\
\indent Upon contact with infectives, susceptible hosts contract the disease with transmission rate $\beta I$, $\beta>0$. Infected hosts recover at rate $\gamma>0$, that is, $1/ \gamma$ is the average infection duration. Disease-induced immunity lasts for $\tau>0$ years, after that hosts become susceptible again. Re-exposure to the pathogen boosts the immune system in immune hosts, resetting the clock of the immunity, meaning that hosts who experience immune system boosting are again immune for additional $\tau$ years. A similar assumption was previously proposed in~\cite{Aron1983}. Compared to the model suggested in~\cite{MVBGR2014JoMB}, we include here a generalized boosting force $\nu\geq 0$ as it was previously done in~\cite{Dafilis2012,Grenfell2012}. From a biological point of view, it makes sense to assume $\nu \in [0,1]$, meaning that secondary exposures might have milder effects than primary ones on the immune system. Nevertheless for numerical interest in Section~\ref{sec:stability} we shall consider any $\nu>0$. Under these assumptions, we find two cohorts of individuals entering the susceptible compartment at time $t$ because of immunity loss. On the one side we have hosts who recovered at time $t-\tau$ and since then did not receive immune system boosting nor die,
$$ \gamma I(t-\tau) \exp\left(-d\tau - \nu\beta\int_{-\tau}^{0}I(t+u)\,du\right).$$
On the other side, we have hosts whose immune system was boosted at time $t-\tau$ and who did not die in the time interval $[t-\tau,t]$,
$$ \nu\beta I(t-\tau)R(t-\tau) \exp\left(-d\tau - \nu\beta\int_{-\tau}^{0}I(t+u)\,du\right).$$
All in all, using $R=I-S-I,$ we obtain the system
\begin{equation}
\begin{aligned}
S'(t) & = d(1-S(t)) -\beta I(t)S(t) +I(t-\tau) \left(\gamma + \nu\beta \left(1-S(t-\tau)-I(t-\tau)\right)\right)\\
& \quad \times \exp\left(-d\tau - \nu\beta\int_{-\tau}^{0}I(t+u)\,du\right),\quad t\geq 0,\\
I'(t) & = \beta I(t)S(t) -(\gamma+d) I(t), \quad  t\geq 0, \\[0.3em]
S(t) & =\phi^S(t),  \quad  -\tau \leq t \leq 0,\\
I(t) & =\phi^I(t), \quad -\tau \leq t \leq 0,
\end{aligned} \label{sys:newSISdelay}
\end{equation}
with given initial functions $\phi^S(t) \geq 0,\, \phi^I(t) \geq 0$, such that $\phi^S(t)+\phi^I(t) \leq 1$ for all $t \in [-\tau,0]$. For more details on the derivation of system \eqref{sys:newSISdelay} from a hybrid model with structured immune population we refer to~\cite{MVBGR2014JoMB}. If in system~\eqref{sys:newSISdelay} we neglect population dynamics ($d=0$), assume constant force of infection ($\beta I(t) \equiv h$ for all $t\geq 0$) and set the boosting rate $\nu=1$, then we obtain the system of equations proposed by Aron~\cite{Aron1983}.\\
\ \\
The dynamics of the immune population ($R=1-S-I$) is given by 
\begin{equation*}
R'(t) = -dR(t) +\gamma I(t)
-I(t-\tau) \left(\gamma + \nu\beta R(t-\tau)\right)e^{ -d\tau - \nu\beta\int_{-\tau}^{0}I(t+u)\,du}.
\end{equation*}
Since it does not affect the solutions of~\eqref{sys:newSISdelay}, it can be omitted.

\section{Global existence and uniqueness of solutions to system \eqref{sys:newSISdelay}}
\label{sec:exiunipos}
Let us first introduce some notations from functional differential equations. Let $f:\Omega \to [0,1]^2$, $\Omega \subset C=C([-\tau,0],[0,1]^2) \subset X=C([-\tau,0],\mathbb{R}^2) $.
For all $t\geq 0$, the segment $x_t \in C$ of a function $x(\cdot)$ is defined by $x_t(\theta):=x(t+\theta)$, $\theta \in [-\tau,0]$. The Banach space $X$ is provided with the norm $\|\cdot \|_C$ defined by
 $$\|\phi \|_C\,=\,\sup \left\lbrace |\phi(u)|,\, u \in [-\tau,0] \right\rbrace
\,=\,\sup \left\lbrace |\phi_1(u)|+|\phi_2(u)|,\, u \in [-\tau,0] \right\rbrace.$$
Then the system \eqref{sys:newSISdelay} can be written in the form
\begin{equation}
x_t' = f(x_t),
\label{eq:rfde}
\end{equation}
with $f$ given by
\begin{equation}
\begin{aligned}
f_1(\phi) & = d(1-\phi_1(0)) -\beta \phi_1(0)\phi_2(0)\\
& \quad +\phi_2(-\tau) \left(\gamma + \nu\beta \left(1-\phi_2(-\tau)-\phi_1(-\tau)\right)\right) \times \exp \left( -d\tau - \nu\beta\int_{-\tau}^{0}\phi_2(u)\,du\right),\\[0.3em]
f_2(\phi) & = \beta \phi_1(0)\phi_2(0) -(\gamma+d) \phi_2(0),
\end{aligned} \label{f1_f2}
\end{equation}
where $(\phi_1,\phi_2)=\phi \in \Omega$. Observe that for biological reasons we are interested only in non-negative solutions, hence elements of $\Omega$ are non-negative valued functions. \\
\ \\
\begin{theorem}
\label{thm:exiunipos}
There exists a unique solution to the system \eqref{sys:newSISdelay}, or equivalently, to equation \eqref{eq:rfde} with right-hand side $f:\Omega=C \to [0,1]^2$, defined by \eqref{f1_f2}. Moreover, for this dynamical system the set $\tilde \Omega \subset \Omega$ defined by
\begin{equation*}
\begin{aligned}
\tilde \Omega & =\left\{\phi \in \Omega \qquad \mbox{such that} \right.\\
&  \left.  	1-\phi_1(0)-\phi_2(0)\geq \int_{-\tau}^{0}( \gamma+\nu\beta (1-\phi_1(u)-\phi_2(u)) )\phi_2(u)e^{-\int^{0}_{u}(d+\nu\beta \phi_2(z))\,dz}\,du \right\},
\end{aligned}
 \label{eq:tildeOmega}
\end{equation*}
is positively invariant. 
\end{theorem}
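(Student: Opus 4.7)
My plan is to split the argument into three parts: (i) local existence and uniqueness via standard RFDE theory, (ii) \emph{a priori} bounds ensuring that the solution remains in $\Omega$ and hence extends globally, and (iii) positive invariance of $\tilde\Omega$ via an auxiliary ``trace-back'' function that satisfies a scalar linear ODE.

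For (i), I observe that $f$ in \eqref{f1_f2} is built from bounded linear functionals on $X$ (pointwise evaluation at $0$ and $-\tau$, and integration against a constant kernel on $[-\tau,0]$), together with polynomial and exponential operations. Its Fr\'echet derivative is therefore continuous and uniformly bounded on every norm-bounded subset of $X$, so $f$ is locally Lipschitz. The classical theorem for RFDEs (Hale--Verduyn Lunel, Chap.~2) yields a unique non-continuable solution $x(t)=(S(t),I(t))$ on a maximal interval $[-\tau, T_{\max})$ for every $\phi\in\Omega$. For (ii), I would proceed by the method of steps: on each interval $[k\tau,(k+1)\tau]$ the non-negativity of the delayed arguments is already known from the previous step. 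From $I'=I(\beta S-(\gamma+d))$ we get $I(t)=I(0)\exp\!\int_0^t(\beta S-\gamma-d)\,ds\geq 0$; at a putative time $t_0$ with $S(t_0)=0$, direct inspection of $f_1$ gives $S'(t_0)\geq d\geq 0$, so $S$ cannot cross $0$; the remaining constraint $R:=1-S-I\geq 0$ is proved jointly with (iii) below.

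For (iii), motivated by the immunity-structured derivation in \cite{MVBGR2014JoMB}, I introduce the trace-back function
\begin{equation*}
\mathcal{T}(t) \;:=\; \int_{t-\tau}^{t} \bigl(\gamma + \nu\beta R(u)\bigr)\,I(u)\,\exp\!\left(-\int_u^{t}\bigl(d+\nu\beta I(z)\bigr)\,dz\right) du,
\end{equation*}
representing the cohort of hosts that entered $R$ during $[t-\tau,t]$ (through recovery or boosting) and are still present at time $t$. Differentiating by Leibniz's rule produces two boundary terms at $u=t$ and $u=t-\tau$, plus a factor $-(d+\nu\beta I(t))\mathcal{T}(t)$ from the $t$-dependence inside the exponential. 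Substituting the $R$-equation displayed just after \eqref{sys:newSISdelay} (and using the identity $\int_{-\tau}^{0}I(t+u)\,du = \int_{t-\tau}^{t}I(z)\,dz$), the two boundary terms cancel exactly and I obtain the clean scalar identity
\begin{equation*}
\frac{d}{dt}\bigl(R(t)-\mathcal{T}(t)\bigr) \;=\; -\bigl(d+\nu\beta I(t)\bigr)\bigl(R(t)-\mathcal{T}(t)\bigr).
\end{equation*}
Hence $R-\mathcal{T}$ preserves its initial sign. Since the condition $\phi\in\tilde\Omega$ is precisely $R(0)-\mathcal{T}(0)\geq 0$, this yields $R(t)\geq\mathcal{T}(t)\geq 0$ for all $t\geq 0$. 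In one stroke this completes step (ii), gives $T_{\max}=\infty$, and shows $x_t\in\tilde\Omega$ for every $t\geq 0$, which is the positive invariance claimed.

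The only genuinely delicate step is the Leibniz differentiation of $\mathcal{T}$ and the careful matching of its boundary contributions with those produced by $R'(t)$; the cancellation that leaves the homogeneous linear equation for $R-\mathcal{T}$ is dictated by consistency with the underlying immunity-structured model, but it must be verified directly by a short calculation.
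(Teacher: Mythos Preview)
Your proposal is correct and follows essentially the same route as the paper: local Lipschitz continuity of $f$ for existence/uniqueness, non-negativity of $I$ from its ODE form, non-negativity of $S$ by inspecting $S'$ at a first zero, and---most importantly---the auxiliary quantity $R(t)-\mathcal{T}(t)$, which is exactly the paper's function $A(t)$ and satisfies the same scalar linear equation $A'=-(d+\nu\beta I)A$. The only cosmetic differences are that the paper spells out the Lipschitz estimate explicitly rather than invoking Fr\'echet differentiability, and works directly with $A$ rather than naming the integral $\mathcal{T}$ separately.
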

\begin{proof}
\textit{(i) Existence/Uniqueness.} For this result it is sufficient to show that $f$ in \eqref{f1_f2} is Lipschitz continuous in every compact subset $K$ of $\Omega$ \cite[Ch.2]{Kuang1993}. That is, there exists a constant $L>0$ such that, for any $\phi,\psi$ in $K\subset\Omega$ we have
\begin{equation}
\label{eq:lipCond}
\| f(\phi)-f(\psi)\| \leq L\|\phi-\psi\|_C.
\end{equation}
	
First observe that, by definition, $\|\phi\|_C\leq 1$ for any $\phi$ in $K\subset\Omega$. 
We define the auxiliary map $g:\Omega \to  \R$ by 
$$g(\phi)=\exp \left(-\nu\beta\int_{-\tau}^{0}\phi_2(u)\,du\right).$$
Hence, for any $\phi,\,\psi \in \Omega$, it holds that $| g(\phi)|\leq 1$ and 
$|g(\psi)-g(\psi)| \leq \nu \beta \tau \|\phi-\psi\|_C$.

Further, observe that for any $\phi,\,\psi \in \Omega$ we have the estimate
\begin{align*}
| \phi_2(-\tau)g(\phi)-\psi_2(-\tau)g(\psi)|
& \leq |\phi_2(-\tau)g(\phi)-\phi_2(-\tau)g(\psi)|+ | \phi_2(-\tau)g(\psi)-\psi_2(-\tau)g(\psi)|\\[0.1em]
& \leq \| \phi\|_C \,|g(\phi)-g(\psi)|+ | \phi_2(-\tau)-\psi_2(-\tau)|\,|g(\psi)|\\[0.1em]
& \leq(1+\nu \beta \tau )\|\phi-\psi\|_C.
\end{align*}
Then for $\phi,\,\psi \in K\subset\Omega$ we have
\begin{align*}
\|f(\phi)-f(\psi)\|
& = |f_1(\phi)-f_1(\psi)|+|f_2(\phi)-f_2(\psi)|\\[0.3em]
& \leq  d|\phi_1(0)-\psi_1(0)|+(d+\gamma)|\phi_2(0)-\psi_2(0)|\\
& \phantom{=} + 2\beta |\phi_1(0)\phi_2(0)-\psi_1(0)\psi_2(0)|\\
& \phantom{=} +(\gamma+\nu\beta)e^{-d\tau}|\phi_2(-\tau)g(\phi)-
\psi_2(-\tau)g(\psi)|\\[0.3em]
& \phantom{=} +\nu\beta e^{-d\tau}|\phi_2(-\tau)\phi_1(-\tau)g(\phi)-
\psi_2(-\tau)\psi_1(-\tau)g(\psi)|\\[0.3em]
& \phantom{=} +\nu\beta e^{-d\tau}|\phi_2(-\tau)^2g(\phi)-
\psi_2(-\tau)^2g(\psi)|\\[0.3em]
& \leq  d\|\phi-\psi\|_C+ \gamma|\phi_2(0)-\psi_2(0)|\\
& \phantom{=} + 2\beta \left[ |\psi_1(0)||\psi_2(0)-\phi_2(0)|+
|\psi_1(0)-\phi_1(0)||\phi_2(0)|\right]\\ 
& \phantom{=} +(\gamma+\nu\beta)e^{-d\tau}(\nu\beta\tau+1)\|\phi-\psi\|_C\\[0.3em]
& \phantom{=} +\nu\beta e^{-d\tau}\left[|\phi_2(-\tau)||\phi_1(-\tau)|\,|g(\phi)-g(\psi)|\right.\\
& \phantom{=} +|\phi_2(-\tau)|\,|g(\psi)|\,|\phi_1(-\tau)-\psi_1(-\tau)|\\
& \phantom{=} +|\psi_1(-\tau)|\,|g(\psi)|\,|\phi_2(-\tau)-\psi_2(-\tau)|]\\[0.3em]
& \phantom{=} +\nu\beta e^{-d\tau}\left[|\phi_2(-\tau)||\phi_2(-\tau)|\,|g(\phi)-g(\psi)|\,\right.\\
& \phantom{=} +2|\phi_2(-\tau)|\,|g(\psi)|\,|\phi_2(-\tau)-\psi_2(-\tau)|].
\end{align*}
Hence, the estimate in \eqref{eq:lipCond} holds with 
\begin{displaymath}
L\geq d+\gamma+2\beta +e^{-d\tau}\left(\nu \beta +(3\nu\beta+\gamma)(\nu\beta\tau+1)\right).
\end{displaymath}

(ii) \textit{Invariance of $\tilde \Omega$}. 
It is convenient to go back to the explicit formulation of model~\eqref{sys:newSISdelay}. Observe first that the equation for $I$ is an ODE, hence given $I(0)\geq 0$ solutions stay non-negative for all $t\geq 0$. Define the auxiliary function
\begin{displaymath}
A(t)= 1-S(t)-I(t)-\int_{t-\tau}^{t}( \gamma+\nu\beta (1-S(u)-I(u)))I(u)e^{-\int^{t}_{u}(d+\nu\beta I(z))\,dz}\,du.
\end{displaymath}
We remark that for a given solution of system  \eqref{sys:newSISdelay}, $A(t)\geq 0$ is equivalent to $x_t \in \tilde \Omega$, where $x_t$ is the solution of equation~\eqref{eq:rfde}.

Differentiation with respect to $t$ yields
\begin{align*}
A'(t) & = -S'(t)-I'(t)-(\gamma+\nu\beta (1-S(t)-I(t)))I(t)\\
& \phantom{=}+\left(\gamma+\nu\beta \bigl(1-S(t-\tau)-I(t-\tau)\bigr)\right)I(t-\tau)e^{-\int^{t}_{t-\tau}(d+\nu\beta I(z))\,dz}\\
& \phantom{=} \underbrace{-\int_{t-\tau}^{t}( \gamma+\nu\beta (1-S(u)-I(u)))I(u)e^{-\int^{t}_{u}(d+\nu\beta I(z))\,dz}\,du}_{=1-S-I-A}
\left(d+\nu\beta I(t)\right)\\
& = -S'(t)-I'(t)-\gamma I(t)- \nu\beta (1+S(t)+I(t))I(t)\\
& \phantom{=} +(\gamma+\nu\beta (1-S(t-\tau)-I(t-\tau))I(t-\tau)e^{-\int^{t}_{t-\tau}(d+\nu\beta I(z))\,dz}\\
&\phantom{=} +d(1-S(t)-I(t))+\nu\beta I(t)(1-S(t)-I(t)) -A(t)\left(d+\nu\beta I(t)\right).
\end{align*}
Now use \eqref{sys:newSISdelay} and observe that all terms on the right-hand side of the last equation cancel out except for the last one, yielding
$$ A'(t)=-(d+\nu\beta I(t))A(t).$$
Hence for $A(0)\geq 0$, $A(t)$ is nonnegative. This shows that, for given initial data in $\tilde \Omega$, the solution satisfies $S(t)+I(t)\leq 1$ for all $t\geq 0$.\\
\ \\
Finally assume $S\left(\bar t\right)=0$ for some $\bar t>0$ and $S(t)>0$ for $t<\bar t$, as well as $I(t)\geq 0$, for $t\leq \bar t$. Then $\dot S\left(\bar t\right)>0$, as
$$\dot S\left(\bar t\right)=d+ \underbrace{I\left(\bar t-\tau\right) \left(\gamma + \nu\beta \left(1-S\left( \bar t-\tau\right)-I\left(\bar t-\tau\right)\right)\right)
	\exp \left( -d\tau - \nu\beta\int_{-\tau}^{0}I\left(\bar t+u\right)\,du\right)}_{\geq 0},$$  
and the solution $S(t)$ remains non-negative. In particular, given non-negative initial data we have that $S(t)+I(t)\geq 0$ for all $t\geq 0$. This completes the proof.
\end{proof}

\section{Equilibria and persistence} 
\label{sec:equilibria}
In this section we determine equilibria of the system \eqref{sys:newSISdelay} and study their dynamical properties.\\
\ \\
The basic reproduction number of system \eqref{sys:newSISdelay} is
\begin{equation}
	\label{def:r0}
	\r0=\frac{\beta}{d+\gamma}.
\end{equation}
Its value indicates the average number of secondary infections generated in a fully susceptible population by one infected host over the course of his infection. 
The basic reproduction number, $\r0$, is a reference parameter in mathematical epidemiology used to understand if, and in which proportion, the disease will spread among the population.\\
\ \\
Setting the second equation in \eqref{sys:newSISdelay} equal to zero, we see that equilibria satisfy either $I^*=0$ or $S^*=1/\r0$. 
\begin{theorem}
\label{thm:stabilityDFE}
If $\r0\leq1$ there is only one equilibrium, the disease-free equilibrium (shortly: DFE) $(S^*,I^*)=(1,0)$, which is globally asymptotically stable in~$\tilde \Omega$.
\end{theorem}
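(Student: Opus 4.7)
My plan is to split the statement into uniqueness of the equilibrium, global attractivity of the DFE, and Lyapunov stability, each handled separately. For uniqueness, setting the right-hand side of the $I$-equation in \eqref{sys:newSISdelay} to zero gives $I^*(\beta S^*-(\gamma+d))=0$, so either $I^*=0$ or $S^*=1/\mathcal{R}_0$. In the latter case, the invariance constraint $S^*+I^*\le 1$ together with $I^*>0$ would demand $1/\mathcal{R}_0<1$, which is excluded by $\mathcal{R}_0\le 1$. Substituting $I^*=0$ into the first equation forces $d(1-S^*)=0$; assuming $d>0$ (tacit here, since $d=0$ produces a whole continuum of equilibria on $\{I=0\}$) one obtains $S^*=1$, so the DFE is the only equilibrium in $\tilde\Omega$.

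For global attractivity I would use $I(t)$ itself as a Lyapunov functional. Since Theorem~\ref{thm:exiunipos} yields $S(t)+I(t)\le 1$, the $I$-equation
$$I'(t)=(\gamma+d)\,I(t)\bigl(\mathcal{R}_0 S(t)-1\bigr)$$
gives $I'(t)\le 0$ whenever $\mathcal{R}_0\le 1$, so $I(\cdot)$ is non-increasing and converges to some $I_\infty\ge 0$. If $I_\infty>0$, then $S(t)\le 1-I_\infty$ for all $t$, hence $\mathcal{R}_0 S(t)-1\le \mathcal{R}_0(1-I_\infty)-1\le -I_\infty$, which forces $I'(t)\le -(\gamma+d)I_\infty^2<0$ uniformly in $t$, contradicting positivity of $I$. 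Therefore $I(t)\to 0$. I would then treat the equation for $R=1-S-I$ (displayed just after \eqref{sys:newSISdelay}) as a scalar linear ODE $R'(t)=-dR(t)+g(t)$ whose forcing is dominated by $|g(t)|\le (2\gamma+\nu\beta)\sup_{u\in[t-\tau,t]}I(u)\to 0$; variation of constants then yields $R(t)\to 0$, and consequently $S(t)=1-I(t)-R(t)\to 1$.

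For Lyapunov stability in $\tilde\Omega$, monotonicity of $I$ along trajectories directly gives $\|I_t\|_\infty\le\|\phi^I\|_\infty$. The same variation-of-constants identity applied to $R$, now with the uniform bound $|g(t)|\le (2\gamma+\nu\beta)\|\phi^I\|_\infty$, produces $|R(t)|\le |R(0)|e^{-dt}+d^{-1}(2\gamma+\nu\beta)\|\phi^I\|_\infty$, and $|R(0)|=|1-\phi^S(0)-\phi^I(0)|$ is small when $\phi$ is close to $(1,0)$; since $|S(t)-1|\le I(t)+R(t)$, both components are controlled linearly in $\|\phi-(1,0)\|_C$, which delivers the required $\delta$--$\epsilon$ estimate. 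I expect the main obstacle to be the critical case $\mathcal{R}_0=1$: the linearization around the DFE is non-hyperbolic, so standard linear stability arguments are unavailable, and one must retain the quadratic gap $\mathcal{R}_0 S(t)-1\le -I(t)$ furnished by positive invariance of $\tilde\Omega$ in order to upgrade the weak inequality $I'\le 0$ into genuine convergence of $I$ to zero.
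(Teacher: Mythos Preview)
Your proposal is correct and follows essentially the same three-part structure as the paper's proof: uniqueness of the equilibrium from the $I$-equation and the constraint $S+I\le 1$, global attractivity via monotonicity of $I(t)$ together with a contradiction argument ruling out $I_\infty>0$, and Lyapunov stability via a linear comparison. The only genuine difference is that for the last two steps the paper works directly with the $S$-equation (using the differential inequality $S'(t)\ge d(1-S(t))-\beta\epsilon$ and a scalar comparison equation), whereas you route the argument through $R=1-S-I$ via variation of constants and then recover $S$ from $S=1-I-R$; both approaches are equally elementary and yield the same linear control of the solution in terms of the initial data.
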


\begin{proof}
	\textit{(i) Equilibria.} Notice that the $I$ equation can be written as
	$$I'=(d+\gamma)I(t)(\r0 S(t)-1).$$
	Since $S(t)\leq 1$, for an equilibrium we either have $I=0$ or $\r0=1$ and $S=1$. Both possibilities retain the DFE.\\
	\ \\	
	\textit{(ii) Convergence.} Clearly $I(t)$ is decreasing and bounded from below by zero thus converges. Assuming $\lim_{t\to \infty}I(t)= q>0$, it holds that there is a $t_q$ such that for $t>t_q$ we have $I(t)>q/2$ .
	Consequently $S(t)<1-q/2<1$ for $t>t_q$. But then for $t>t_q$,
	$$I(t)'<(d+\gamma)I(t)(-q/2)$$ which implies $\lim_{t\to \infty}I(t)= 0$.  Let
	$$\eta(t):= I(t-\tau) \left(\gamma + \nu\beta \left(1-S(t-\tau)-I(t-\tau)\right)\right) \exp\left(-d\tau - \nu\beta\int_{-\tau}^{0}I(t+u)\,du\right),$$
	Then we may write
	$$S'(t)=d(1-S(t))-\beta I(t)S(t)+\eta(t),$$ 
	where $\lim_{t \to \infty}I(t) =0$ and $\lim_{t \to \infty}\eta(t) =0$, from which we can easily deduce $\lim_{t \to \infty} S(t)= 1$.\\
	\ \\	
	\textit{(iii) Stability.}
	For any $\epsilon>0$, let us choose $\delta:=\max\{\epsilon,\frac{\beta \epsilon}{d}\}$. We claim that if the initial condition is in the $\epsilon$-neighborhood of the DFE, then the solution stays in the $\delta$-neighborhood of the DFE. Let $(\phi,\psi) \in \tilde \Omega$ be initial conditions such that $\phi(s)>1-\epsilon$ and $\psi(s)<\epsilon$. Then $I(t)<\epsilon$ for all $t>0$ since $I(t)$ is decreasing, and the inequality
	$S'(t)>d(1-S(t))-\beta\epsilon$ holds for $t\geq 0$. Consider the comparison equation $y(t)'=d(1-y(t))-\beta \epsilon$ with $y(0)=S(0)>1-\epsilon$. Since $y(t)$ converges to $1-\frac{\beta \epsilon}{d}$ monotonically and $S(t)\geq y(t)$, we either have $S(t)>1-\epsilon $ (if $S(0)<1-\frac{\beta \epsilon}{d}$)  or $S(t)\geq1-\frac{\beta \epsilon}{d}$ (if $S(0)\geq 1-\frac{\beta \epsilon}{d}$) for all $t\geq 0$, and the solution remains in the $\delta-$neighborhood of the DFE. 
\end{proof}
\begin{prop}
For $\r0>1$ there is a unique endemic equilibrium $(S^*,I^*)=(1/\r0,I^*)$, with $I^*>0$.
\end{prop}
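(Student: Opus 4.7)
The plan is to use $I' = 0$ together with $I^* > 0$ to pin down $S^* = (d+\gamma)/\beta = 1/\r0$ immediately, reducing the problem to a single scalar equation for $I^*$. Since $\r0 > 1$, the quantity $R_{\max} := 1 - 1/\r0$ is strictly positive, and requiring the recovered fraction $R^* = 1 - S^* - I^*$ to be non-negative restricts the endemic $I^*$ to $(0, R_{\max}]$. The remaining task is to establish existence and uniqueness of a root of the $S' = 0$ equation in this interval.

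Next I would substitute the constant solution into the first equation of \eqref{sys:newSISdelay}. Since $I(t+u) \equiv I^*$ on $[-\tau,0]$, the distributed delay collapses to $\tau I^*$, the exponential prefactor becomes $e^{-(d+\nu\beta I^*)\tau}$, and $R^* = R_{\max} - I^*$. Using $\beta S^* = d+\gamma$ and dividing through by $I^* > 0$, the equilibrium condition would be rewritten in the form $L(I^*) = M(I^*)$ with
\[
L(I) := \bigl(\gamma + \nu\beta(R_{\max} - I)\bigr)\,e^{-(d+\nu\beta I)\tau}, \qquad M(I) := (d+\gamma) - \frac{dR_{\max}}{I}.
\]

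To finish, I would verify strict monotonicity on $(0, R_{\max}]$: in $L$ the linear factor $\gamma + \nu\beta(R_{\max}-I)$ is non-negative and non-increasing, the exponential factor is positive and non-increasing, with at least one of them strictly decreasing, so $L$ is strictly decreasing; $M$ is manifestly strictly increasing, with $M(I) \to -\infty$ as $I \to 0^+$ and $M(R_{\max}) = \gamma$. The endpoint values $L(0^+) = (\gamma + \nu\beta R_{\max})e^{-d\tau} > 0$ and $L(R_{\max}) = \gamma\,e^{-(d+\nu\beta R_{\max})\tau} < \gamma = M(R_{\max})$ then show that $L - M$ is continuous and strictly decreasing, positive near $0$ and negative at $R_{\max}$, so the intermediate value theorem delivers a unique root $I^* \in (0, R_{\max})$.

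I do not anticipate a serious obstacle: once the delay integrals have collapsed to multiples of $\tau$ at the constant equilibrium, the problem becomes univariate and the monotonicity-plus-endpoint check is elementary. The only mildly delicate step is keeping the signs straight in the algebraic rearrangement so that $L$ and $M$ turn out to be monotone in opposite directions; after that, existence and uniqueness follow in one stroke from the strict monotonicity of $L - M$.
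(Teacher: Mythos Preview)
Your argument is correct and, in fact, cleaner than the paper's. The key simplification is that you divide the $S'=0$ equation through by $I^*>0$ before analysing it. The paper instead keeps the factor $I^*$ and studies the intersection of the line $y_1(x)=(\gamma+d)x-dR_{\max}$ with the curve $y_2(x)=e^{-d\tau}x\bigl(\gamma+\nu\beta(R_{\max}-x)\bigr)e^{-\nu\beta\tau x}$; because of the extra factor $x$, $y_2$ is not monotone, so the authors locate its critical points and inflection points and then argue geometrically that the line can meet it only once. By stripping off that factor you obtain $L$ as a product of two non-increasing positive functions, which makes the monotonicity-plus-endpoint argument immediate.

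One small imprecision: when $\nu=0$ (a case the paper allows), neither factor in $L$ is strictly decreasing, so $L$ is constant rather than strictly decreasing. This does not affect the conclusion, since $M$ is strictly increasing (here you do use $d>0$, as does the paper), and hence $L-M$ is still strictly decreasing. It would be enough to phrase the argument as ``$L$ is non-increasing and $M$ is strictly increasing, so $L-M$ is strictly decreasing.''
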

\begin{proof}
Assume $\r0>1$. Endemic equilibria are given by positive intersection points, $x=I^*$, of the line $y_1(x)$ and the curve $y_2(x)$, where
\begin{align}
y_1(x)& = (\gamma+d) x -d\left(1-\frac{1}{\r0}\right), \label{def:line_y1}\\
y_2(x) & = \alpha x \left(\rho-\kappa x\right)e^{-\eta x},\qquad 0\leq x\leq \rho/ \kappa, \label{eq:study_curve_Istar_maxboostcase}
\end{align}
with the coefficients defined as
\begin{align*}
\kappa &:=\nu \beta >0,\\
\rho &:= \gamma+ \kappa \left(1-\frac{1}{\r0}\right)>0, \mbox{ as } \r0>1,\\
\alpha&:=e^{-d\tau}>0,\\
\eta& :=\kappa\tau>0.
\end{align*}
As $\gamma,\,d>0$ and $\r0>1$, the line $y_1(x)$ has a negative y-intercept,  $\left(0,-d\left(1-\frac{1}{\r0}\right)\right)$ and a positive $x$-intercept, $\left(\frac{d}{d+\gamma}\left(1-\frac{1}{\r0}\right),0\right)$. It is obvious that $y_2(x)\geq 0$ for all $x\geq 0$, $y_2(0)=0 =y_2(\rho/\kappa)$. The first derivative of \eqref{eq:study_curve_Istar_maxboostcase} is given by
\begin{align*}
y_2'(x) & =\alpha \left(\eta\kappa x^2-x(2\kappa+\eta\rho)+\rho\right)e^{-\eta x}.
\end{align*}
It follows that $y_2'(x)=0$ for
$$x_{1,2}=\frac{\eta\rho +2\kappa \pm \sqrt{\eta^2\rho^2+4\kappa^2}}{2\eta\kappa}.$$
Observe that
\begin{displaymath}
x_{2}  =\frac{\eta\rho +2\kappa+ \sqrt{\eta^2\rho^2+4\kappa^2}}{2\eta\kappa}\; >\; \frac{\eta\rho +2\kappa+ \sqrt{(\eta\rho-2\kappa)^2}}{2\eta\kappa}\; = \;\frac{\rho}{\kappa},
\end{displaymath}
hence we have only one extremal point in the definition interval $[0,\rho/\kappa]$. The extremal point $(x_1,y(x_1))$ is a local maximum as $y_2(0)=0$, $y_2'(0)>0$ and $\lim y_2(x)=0$ for $x \to \rho/\kappa$.\\
\indent To guarantee the existence of a unique intersection point between the line $y_1$ and the curve $y_2$, we  determine the inflection points. We compute the second derivative 
\begin{equation*}
y_2''(x)  =\alpha \left(-\eta^2\kappa x^2+x(4\eta\kappa+\eta^2\rho)-2(\rho\eta+\kappa) \right)e^{-\eta x}.
\end{equation*}
The inflection points of $y_2(x)$ are 
$$x_{a,b}=\frac{\eta\rho +4\kappa \pm \sqrt{\eta^2\rho^2+8\kappa^2}}{2\eta\kappa}.$$
Observe that 
$$ (\eta\rho-2\kappa)^2 \leq\eta^2\rho^2+8\kappa^2 \leq (\eta\rho+4\kappa)^2.$$
Hence we have
\begin{displaymath}
x_{a}  = \frac{\eta\rho +4\kappa - \sqrt{\eta^2\rho^2+8\kappa^2}}{2\eta\kappa}\;> \;\frac{\eta\rho+ 4\kappa - \sqrt{(\eta\rho+4\kappa)^2}}{2\eta\kappa} = 0,
\end{displaymath} 
 whereas $x_b\geq x_a$ and
\begin{displaymath}
x_{b}  = \frac{\eta\rho +4\kappa + \sqrt{\eta^2\rho^2+8\kappa^2}}{2\eta\kappa}\; \geq
 \; \frac{\eta\rho+ 4\kappa + \sqrt{(\eta\rho-2\kappa)^2}}{2\eta\kappa}\; \geq\;  \frac{\rho}{\kappa}+\frac{1}{\eta} >\frac{\rho}{\kappa}.
\end{displaymath}
Hence the point $x_b\notin [0,\rho/\kappa]$ and the function $y_2(x)$ has at most one inflection point in its domain.
It follows that there is only one intersection point $x=I^*>0$, which corresponds to the endemic equilibrium.
\end{proof} 
Next we prove the persistence of the disease for $\r0>1$. 
Consider the semiflow $\Phi$ on  $\tilde \Omega$, defined by the unique global solutions. 
Let us define the persistence function
\begin{equation*}
\rho:\tilde \Omega \to \mathbb{R}_{+}, \rho(\phi)=\phi_2(0).
\end{equation*}
Let
\begin{align*}
\tilde \Omega_{+}&:=\{\phi\in \tilde \Omega |\rho(\phi)>0\},\\
\tilde \Omega_{0}&:=\{\phi\in \tilde \Omega |\rho(\phi)=0\}=\tilde \Omega \setminus{\tilde \Omega_{+}},
\end{align*}
where $\tilde \Omega_{0}$ is called the extinction space corresponding to $\rho$, because $\tilde \Omega_{0}$ is the collection of states where the disease is not present. 
\begin{prop}
\label{pro:tilde}
The sets  $\tilde \Omega_{0}$ and $\tilde \Omega_{+}$ are forward invariant under the semiflow~$\Phi$.
\end{prop}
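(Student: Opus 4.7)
The plan is to exploit a structural observation about system~\eqref{sys:newSISdelay}: although the full system is of delay type, the second equation is an honest scalar ordinary differential equation for $I$ with no delayed arguments on its right-hand side,
\[
I'(t) = I(t)\bigl(\beta S(t) - (\gamma+d)\bigr).
\]
By Theorem~\ref{thm:exiunipos}, solutions exist globally and $S(\cdot)$ is continuous (in fact $C^1$) on $[0,\infty)$ with values in $[0,1]$, so $a(t) := \beta S(t) - (\gamma+d)$ is continuous and bounded on $[0,\infty)$. Variation of parameters then yields the closed form
\[
I(t) = I(0)\,\exp\!\left(\int_0^t a(s)\,ds\right), \qquad t \geq 0.
\]

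Forward invariance of both sets then follows directly from this representation. If $\phi \in \tilde\Omega_+$, then $I(0)=\phi_2(0)>0$, so the exponential factor above being strictly positive forces $I(t)>0$ for every $t \geq 0$; in particular $\rho(\Phi(t,\phi)) = x_t(0) = I(t) > 0$, hence $\Phi(t,\phi) \in \tilde\Omega_+$. Symmetrically, if $\phi \in \tilde\Omega_0$, then $I(0)=0$, and the same formula (or, equivalently, uniqueness of solutions of the linear scalar ODE for $I$, of which the zero function is one) gives $I(t)=0$ for all $t \geq 0$, so $\Phi(t,\phi) \in \tilde\Omega_0$. Since $\tilde\Omega$ itself is already known to be positively invariant from Theorem~\ref{thm:exiunipos}, no further work is needed to ensure $\Phi(t,\phi)$ lies in $\tilde\Omega$.

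I do not anticipate a genuine obstacle; the argument is essentially a one-line ODE computation. The only conceptual point worth highlighting is that the delayed feedback terms (such as $\nu\beta I(t-\tau)R(t-\tau)$ or the integral in the exponent) appear only in the $S$ equation, not in the $I$ equation. Consequently, positivity or nullity of $I$ is transported forward by classical scalar ODE theory, and no finer machinery from delay equations is required here.
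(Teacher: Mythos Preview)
Your argument is correct and is essentially identical to the paper's own proof: the paper also writes the explicit formula $I(t)=I(t_0)\exp\bigl(\int_{t_0}^t(\beta S(u)-\gamma-d)\,du\bigr)$ for the scalar $I$-equation and reads off that $I(t_0)=0$ forces $I(t)\equiv 0$ while $I(t_0)>0$ forces $I(t)>0$. Your write-up is more carefully worded (noting boundedness of $S$, invoking Theorem~\ref{thm:exiunipos} for the ambient invariance of $\tilde\Omega$), but there is no substantive difference in method.
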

\begin{proof}
From (2.1) we have $I(t)=I(t_0)e^{\int_{t_0}^t (\beta S(u)-\gamma-d)du}$ for any $t\geq t_0$, hence  $I(t_0)=0$ implies $I(t)=0$  and $I(t_0)>0$ implies $I(t)>0$ for all $t\geq t_0$.
\end{proof}
We now introduce some terminology of persistence theory from \cite[Chapters 3.1 and 8.3]{ThiemeSmith}.
\begin{definition}
Let $X$ be a nonempty set and $\rho: X\rightarrow \mathbb{R}_{+}$.
\begin{enumerate}
\item A semiflow $\Phi :\mathbb{R}_{+}\times X \rightarrow X$ is called uniformly weakly $\rho$-persistent, if there exists some $\epsilon>0$ such that
\begin{equation*}
\limsup_{t\to\infty} \rho (\Phi (t,x))>\epsilon \qquad \forall \,x\in X,~ \rho(x)>0.
\end{equation*}
\item A semiflow $\Phi$ is called uniformly (strongly) $\rho$-persistent, if there exists some $\epsilon>0$ such that
\begin{equation*}
\liminf_{t\to\infty} \rho (\Phi (t,x))>\epsilon \qquad \forall \,x\in X,~ \rho(x)>0.
\end{equation*}
\item A set $M\subset X$ is called weakly $\rho$-repelling if there is no $x\in X$ such that $\rho(x)>0$ and $\Phi(t,x)\to M$ as $t\to\infty$.
\end{enumerate}
\end{definition}
For a function $f:\mathbb{R_+}\to\mathbb{R}$, we use the notation
\begin{equation*}
f^{\infty}=\limsup_{t\to\infty}f(t) \qquad \text{and}\qquad 
f_{\infty}=\liminf_{t\to\infty}f(t).
\end{equation*}
\begin{theorem}
\label{thm:rho}
If $R_{0}>1$, then the semiflow $\Phi$ is uniformly $\rho$-persistent,  i.e. there is a $\delta>0$ such
that for any solution $\liminf_{t \to \infty} I(t)\geq \delta$.
\end{theorem}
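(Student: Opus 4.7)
The plan is to apply the acyclicity approach to persistence (e.g.\ Theorem 8.17 in \cite{ThiemeSmith}) and reduce the problem to showing that the DFE is a weakly $\rho$-repelling isolated invariant set of the boundary flow on the extinction space $\tilde\Omega_0$. Uniform strong $\rho$-persistence will then follow, since the orbits remain in the bounded set $\tilde\Omega\subset C([-\tau,0],[0,1]^2)$ and the boundedness of the right-hand side $f$ ensures precompactness of orbit closures via Arzel\`a--Ascoli.

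I would first analyse the dynamics inside $\tilde\Omega_0$. By Proposition~\ref{pro:tilde}, $I(t)\equiv 0$ for all $t\geq 0$ on any orbit starting in $\tilde\Omega_0$, so the $S$-equation reduces to $S'(t)=d(1-S(t))$, whose solutions satisfy $S(t)\to 1$. Consequently the only compact invariant set contained in $\tilde\Omega_0$ is the DFE, and every orbit in $\tilde\Omega_0$ converges to it.

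To establish that the DFE is weakly $\rho$-repelling, I argue by contradiction. Assume there exists $\phi\in\tilde\Omega_+$ whose solution satisfies $(S(t),I(t))\to(1,0)$ as $t\to\infty$. Since $\r0>1$, we have $\beta-(\gamma+d)=(\gamma+d)(\r0-1)>0$, so by continuity there exists $t_0$ such that
\begin{equation*}
\beta S(t)-(\gamma+d)\geq \tfrac{1}{2}(\gamma+d)(\r0-1)=:c>0 \qquad \text{for all } t\geq t_0.
\end{equation*}
Integrating the equation $I'(t)=I(t)(\beta S(t)-\gamma-d)$ yields
\begin{equation*}
I(t)=I(t_0)\exp\!\left(\int_{t_0}^t(\beta S(u)-\gamma-d)\,du\right)\;\geq\; I(t_0)\,e^{c(t-t_0)}\;\to\;\infty,
\end{equation*}
which contradicts $I(t)\to 0$, noting that $I(t_0)>0$ by forward invariance of $\tilde\Omega_+$ (Proposition~\ref{pro:tilde}).

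Having verified that the unique compact invariant set in $\tilde\Omega_0$ is the DFE and that this equilibrium is weakly $\rho$-repelling, the cited acyclicity theorem produces the desired $\delta>0$ with $\liminf_{t\to\infty}I(t)\geq\delta$ for every solution with $I(0)>0$. The main obstacle in executing this scheme is the careful verification of the technical hypotheses of the persistence theorem in the DDE setting—continuity of the semiflow $\Phi$, existence of a compact global attractor, and isolation of the DFE in $\tilde\Omega$—but all of these follow from Theorem~\ref{thm:exiunipos} together with the fact that $\tilde\Omega$ consists of uniformly bounded functions and $f$ is Lipschitz.
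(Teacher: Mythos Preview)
Your proof is correct and follows essentially the same route as the paper: analyse the boundary dynamics on $\tilde\Omega_0$, show the DFE is weakly $\rho$-repelling by contradiction, and then invoke the acyclicity and compact-attractor results from \cite{ThiemeSmith} (the paper cites Theorems~8.17 and~4.5 there). The only cosmetic difference is that the paper recovers $S(t)\to 1$ indirectly via the differential inequality $S'(t)\geq d(1-S(t))-\epsilon$ after first bounding $I(t)$, whereas you use the assumed convergence $S(t)\to 1$ directly---your version is slightly cleaner.
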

\begin{proof}
We shall apply Theorem~4.5 and Theorem~8.17 in~\cite{ThiemeSmith}.
First we show that the disease free equilibrium $(1,0)$ 
is weakly $\rho$-repelling. Suppose that there exists $\psi_{0}\in \tilde \Omega$ such that $\rho (\psi_{0})>0$ with
\begin{equation}\label{eq:DFEattract}
\lim_{t\to\infty}\Phi (t,\psi_{0})=(1,0). 
\end{equation}
For such a solution, $I(0)>0$ and $\lim_{t \to \infty} I(t)=0$. Then for any $\epsilon>0$ there is a $T>0$ such that $I(t)<\epsilon/\beta$ for all $t>T$. Then, for $t>T$, we have the relation
$$S'(t)\geq d(1-S(t))-\epsilon,$$
hence $S_\infty\geq 1-\epsilon/d $. This holds for any $\epsilon>0$, so for sufficiently large $t$,
$S(t)\r0>1$ holds, and from (2.1) we have
$$I'(t)=I(t)(d+\gamma)(\r0 S(t)-1)>0,$$
that contradicts $I(t)\to 0$. Thus, there is no $\psi_0 \in \tilde \Omega$ such that $\rho(\psi_{0})>0$ and \eqref{eq:DFEattract} holds and $(1,0)$ 
is weakly $\rho$-repelling.
By Proposition \ref{pro:tilde}, together with the fact that whenever $I(t)\equiv 0$, $S'(t) =d(1-S(t))$ so $S(t)\to 1$, and thus $\cup_{\phi\in \tilde \Omega_{0}}\omega(\phi)=\{(1,0)\}$, one can see that $\Phi$ is uniformly weakly $\rho$-persistent using  Theorem~8.17 in \cite{ThiemeSmith}. Since $\Phi$ has a compact global attractor on $\tilde \Omega$, we can apply Theorem~4.5 in~\cite{ThiemeSmith} to conclude that $\Phi$ is uniformly $\rho$-persistent.
\end{proof}

\section{The case $\tau=0$}
\label{sec:nodelay}
Assume that there is no disease-induced immunity, that is, hosts become susceptible immediately after recovery.
Setting $\tau=0$ in \eqref{sys:newSISdelay} and omitting the time argument, we obtain
\begin{equation}
\begin{aligned}
S' & = d(1-S) -\beta IS+I \left(\gamma + \nu\beta \left(1-S-I\right)\right),\\[0.3em]
I' & = \beta IS -(\gamma+d) I.
\end{aligned} \label{sys:tau0}
\end{equation} 
Observe that if $\nu=0$, this is a standard SIS model, cf. \cite{Brauer2001}.
\begin{prop}
	\label{prop:tau0_invariantset}
	The set 
	\begin{displaymath}
	\mathcal{D}=\left\{(S,I) \in \R^2, \; S\geq 0, \, I\geq 0,\, S+I\leq 1\right\}
	\end{displaymath}
	is positively invariant for the system \eqref{sys:tau0}.
\end{prop}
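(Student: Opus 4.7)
The plan is a standard boundary-analysis argument for positive invariance, exploiting the fact that both $I$ and $N:=S+I$ satisfy linear (scalar, time-inhomogeneous) ODEs whose sign is controlled by their initial data.

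First, I would handle non-negativity of $I$. From the second equation of \eqref{sys:tau0},
\begin{equation*}
I(t) = I(0)\exp\!\left(\int_{0}^{t}\bigl(\beta S(u)-\gamma-d\bigr)\,du\right),
\end{equation*}
so $I(0)\geq 0$ implies $I(t)\geq 0$ for all $t\geq 0$ on the maximal interval of existence.

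Next, I would bound the total mass $N=S+I$. Adding the two equations in \eqref{sys:tau0} and simplifying, the terms $\pm\beta IS$ and $\pm\gamma I$ cancel, leaving
\begin{equation*}
N'(t) = d\bigl(1-S(t)\bigr) - dI(t) + \nu\beta I(t)\bigl(1-S(t)-I(t)\bigr) = \bigl(d+\nu\beta I(t)\bigr)\bigl(1-N(t)\bigr).
\end{equation*}
Setting $W(t):=1-N(t)$, this reads $W'(t)=-\bigl(d+\nu\beta I(t)\bigr)W(t)$, and thus
\begin{equation*}
W(t) = W(0)\exp\!\left(-\int_0^t\bigl(d+\nu\beta I(u)\bigr)\,du\right).
\end{equation*}
Since $I(t)\geq 0$ by the first step, the exponential is well-defined and strictly positive, so $N(0)\leq 1$ forces $W(t)\geq 0$, i.e.\ $S(t)+I(t)\leq 1$ for all $t\geq 0$. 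Together with $I\geq 0$, this also yields $I(t)\in[0,1]$.

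Finally, I would show $S(t)\geq 0$ by inspecting the vector field on the face $\{S=0\}$. Suppose for contradiction there is a first time $\bar t>0$ with $S(\bar t)=0$ while $S(t)>0$ on $[0,\bar t)$; at $\bar t$ the two previous steps give $I(\bar t)\in[0,1]$ and $1-S(\bar t)-I(\bar t)=1-I(\bar t)\geq 0$, hence
\begin{equation*}
S'(\bar t) = d + I(\bar t)\bigl(\gamma+\nu\beta(1-I(\bar t))\bigr) \geq d > 0,
\end{equation*}
contradicting the assumption that $S$ just reached zero from the positive side. Therefore $S(t)\geq 0$ throughout, and $(S(t),I(t))\in\mathcal D$ for all $t\geq 0$.

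There is no real obstacle here: the only mildly delicate point is the order of the arguments, since the non-negativity of $S$ on the face $S=0$ uses both $I\geq 0$ and $I\leq 1$, so one must first secure $I\in[0,1]$ via the $I$-equation and the $N$-equation before turning to $S$.
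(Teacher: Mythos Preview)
Your proof is correct and follows essentially the same boundary-analysis approach as the paper: the paper checks that $I'=0$ on $\{I=0\}$, that $(S+I)'=(d+\nu\beta I)(1-(S+I))$ vanishes on $\{S+I=1\}$, and that $S'\geq 0$ on $\{S=0\}$. Your version differs only in presentation---you integrate the linear scalar equations for $I$ and for $W=1-N$ explicitly, rather than merely reading off the sign of the derivative on the boundary---and you are more explicit about the logical order of the three steps; this makes your argument slightly tidier than the paper's, but the substance is identical.
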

\begin{proof}
	Assume $S\left(\bar t\right)=0$, and $0\leq I(\bar t )\leq 1$ for some $\bar t>0$. Then 
	$$S'\left(\bar t\right) = d+I\left(\bar t\right)\left(\gamma + \nu\beta \left(1-I\left(\bar t\right)\right)\right)\geq 0,$$
	hence the $S$-component remains non-negative. Similarly,
	if $I\left(\bar t\right)=0$ for some $\bar t >0$, then $I'\left(\bar t\right)=0$.
	Now consider the sum $S+I$,
	$$
	0\leq S+I\leq 1 \qquad \Rightarrow \qquad (S+I)' = (1-(S+I))(d+\nu\beta I)\geq 0.$$
	If $(S+I)\left(\bar t\right)= 1$, then $(S+I)'\left(\bar t\right) = 0$.\\
	Hence, any solution starting in $\mathcal D$ remains in this set.
\end{proof}
Observe that the  limit case, system \eqref{sys:tau0}, has the same basic reproduction number in \eqref{def:r0} as the system with delay \eqref{sys:newSISdelay}.
\begin{prop}
	\label{prop:tau0_stability}
If $\r0\leq 1$, the disease-free equilibrium is the only equilibrium of \eqref{sys:tau0} and it is globally asymptotically stable.
If $\r0>1$, there is a unique endemic equilibrium which is globally asymptotically stable in $\mathcal D \setminus \mathcal D_0$, where $	\mathcal{D}_0=\left\{(S,0) \in \R^2, \; 1 \geq S\geq 0 \right\}$ . In this case the DFE is unstable, but attracts solutions in the invariant disease free subspace $\mathcal D_0$.
\end{prop}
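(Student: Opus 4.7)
The plan is to first enumerate the equilibria, then treat the stability dichotomy. Setting $I' = 0$ gives $I = 0$ (which forces $S = 1$, the DFE) or $S^* = 1/\r0$. Substituting $S^* = 1/\r0$ into $S' = 0$ and using $\beta S^* = \gamma + d$ yields the quadratic
\[
\nu\beta\, I^2 + \bigl(d - \nu\beta(1 - 1/\r0)\bigr) I - d(1 - 1/\r0) = 0,
\]
whose discriminant equals the perfect square $\bigl(d + \nu\beta(1 - 1/\r0)\bigr)^2$. This gives the unique positive root $I^* = 1 - 1/\r0$ whenever $\r0 > 1$; note that $S^* + I^* = 1$, so the recovered compartment is empty at the endemic equilibrium, consistent with the absence of an immunity period when $\tau = 0$.

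For $\r0 \leq 1$, I would use the Lyapunov function $V(S, I) := I$. Since $I' = (\gamma + d) I (\r0 S - 1)$ and $S \leq 1$ on $\mathcal{D}$, one has $V' \leq 0$, and the largest invariant subset of $\{V' = 0\}$ is $\{(1, 0)\}$: on the invariant axis $\mathcal{D}_0 = \{I = 0\}$ the reduced equation $S' = d(1 - S)$ drives $S$ monotonically to $1$. LaSalle's invariance principle yields global attractivity of the DFE, while Lyapunov stability follows by combining the monotonicity of $V$ with a standard comparison estimate on $S' = d(1 - S) + O(I)$ near the DFE.

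For $\r0 > 1$, my plan combines linearization, Dulac, and the Poincar\'e--Bendixson theorem. Linearizing at $(1, 0)$ produces eigenvalues $-d$ and $(\gamma + d)(\r0 - 1) > 0$, so the DFE is a hyperbolic saddle; its global stable manifold coincides with the invariant axis $\mathcal{D}_0$, and on $\mathcal{D}_0$ the scalar equation $S' = d(1 - S)$ drives $S \to 1$, proving that the DFE attracts $\mathcal{D}_0$. Dulac's criterion with weight $B(S, I) = 1/I$ yields
\[
\frac{\partial (B S')}{\partial S} + \frac{\partial (B I')}{\partial I} = -\frac{d}{I} - \beta(1 + \nu) < 0 \qquad \text{for } I > 0,
\]
ruling out periodic orbits and polycycles in the interior of $\mathcal{D}$. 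Combined with the forward invariance of the bounded set $\mathcal{D}$ from Proposition~\ref{prop:tau0_invariantset}, Poincar\'e--Bendixson forces every $\omega$-limit set of an orbit in $\mathcal{D} \setminus \mathcal{D}_0$ to be a single equilibrium; since orbits with $I(0) > 0$ satisfy $I(t) > 0$ for all $t$ (from the factored form $I' = I(\beta S - \gamma - d)$) and the stable manifold of the DFE equals $\mathcal{D}_0$, such orbits must converge to the unique endemic equilibrium.

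The main technical step is the identification of the global stable manifold of the DFE with $\mathcal{D}_0$, which rules out any saddle separatrix splitting the basin of attraction of the endemic equilibrium. This identification is clean here because $\mathcal{D}_0$ is invariant, is tangent to the stable eigenspace at the DFE, and carries the scalar dynamics $S' = d(1 - S)$ on which $S \to 1$. Local asymptotic stability of the endemic equilibrium, a by-product not strictly needed for the Poincar\'e--Bendixson argument, follows from the Jacobian there: at $(S^*, I^*)$ the trace equals $-d - \beta(1 + \nu) I^* < 0$ and the determinant equals $\beta I^* (d + \nu\beta I^*) > 0$.
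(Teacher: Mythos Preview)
Your proof is correct and follows the same overall architecture as the paper: the same equilibrium computation (including the observation $S^*+I^*=1$), the same Jacobian analysis, the same Dulac function $1/I$, and the same Poincar\'e--Bendixson conclusion. The only notable differences are in two auxiliary steps. For $\r0\leq 1$ you use $V=I$ and LaSalle, whereas the paper appeals back to its Theorem~\ref{thm:stabilityDFE} (a direct monotonicity argument for $I$, which is of course the same content). For $\r0>1$ you exclude convergence to the DFE by identifying its global stable manifold with $\mathcal D_0$, while the paper instead invokes the uniform persistence established earlier (Theorem~\ref{thm:rho}) to get $\liminf_{t\to\infty} I(t)>0$; your stable-manifold route is more self-contained for the planar ODE and avoids importing the persistence machinery, while the paper's route reuses a result already proved for the full delay system.
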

\begin{proof}
	(i) \textit{Existence/Uniqueness of equilibria.} It is trivial to observe that the system \eqref{sys:tau0} has a unique DFE, namely, $(S^*,I^*)=(1,0)$.\\
	\indent Consider endemic equilibria $(S^*,I^*)$, with $I^*>0$. From the second equation in \eqref{sys:tau0}, we see that $S^*=1/\r0>0$, as $\r0>0$ and $I^*$ solves the quadratic equation 
	\begin{equation}
	\nu \beta (I^*)^2 -I^*\left( \nu\beta \left(1-\frac{1}{\r0}\right)-d\right)+d\left(\frac{1}{\r0}-1\right)=0.
	\label{eq:Istarimplict}
	\end{equation}
	The proof of uniqueness of the endemic equilibrium is given by the following observation. For $\r0>1$, the graph of \eqref{eq:Istarimplict} is a parabola opening upwards with negative y-intercept. Hence there is a unique strictly positive zero $I^*$ of \eqref{eq:Istarimplict}. For $\r0<1$, this parabola has a positive y-intercept and its vertex is on the negative half-plane, hence \eqref{eq:Istarimplict} has no positive zeroes. For $\r0=1$, the only biologically relevant solution is $I^*=0$.	Observe that
	\begin{displaymath}
	(S+I)' = \left(d+\nu\beta I \right)(1-(S+I))\qquad \Rightarrow \qquad (S+I)' =0 \Leftrightarrow S^*+I^*=1.
	\end{displaymath}
	In particular, for $\r0>1$ we have the relation $I^*=1-1/\r0>0$.\\
	\ \\
	(ii) \textit{Linearized stability.} 	
	The Jacobian matrix of the ODE system  \eqref{sys:tau0} is 
	\begin{equation}
	J(S,I)=\begin{pmatrix}
	-d-\beta I(1+\nu) & \gamma-\beta S+\nu \beta(1-S-2I)\\
	\beta I & \beta S-\gamma -d
	\end{pmatrix}.
	\label{eq:jacmax_ODE2d}
	\end{equation}
	Evaluation of \eqref{eq:jacmax_ODE2d} at the DFE yields an upper-triangular matrix with eigenvalues $\lambda_1=-d<0$ and $\lambda_2= \beta(1-1/\r0)$. Hence, if $\r0<1$ the DFE is locally asymptotically stable, whereas if $\r0>1$, the DFE is unstable. For $\r0>1$ we evaluate the Jacobian matrix \eqref{eq:jacmax_ODE2d} at the endemic equilibrium
	\begin{equation*}
	J_E=J(1/\r0,I^*)=\begin{pmatrix}
	-d-\beta I^*(1+\nu) & -d-\nu \beta I^* \\
	\beta I^* & 0
	\end{pmatrix},
	\end{equation*}
	with $I^*=1-1/\r0$. We see that $\mbox{Tr}(J_E)=-d-\beta I^*(1+\nu)<0$, and $det(J_E)  = \beta I^*\left(d+\nu \beta I^*\right)>0$, hence the endemic equilibrium is locally asymptotically stable.\\
	\ \\
	(iii)\textit{Global stability.}
	If $\r0\leq1$, the global stability of the DFE can be proved exactly the same way as we proved it for the more general case in Theorem~\ref{thm:stabilityDFE}. Similarly, for $\r0>1$, the same argument as in Theorem~\ref{thm:rho} shows that for any solution with $I(t)>0$, we have $\liminf_{t\to \infty} I(t)>0$. To avoid repetition, here we only prove the global stability of the endemic equilibrium when $\r0>1$ and $\tau=0$.
	We can exclude the existence of periodic orbits using the negative criterion of Bendixson-Dulac \cite{GuckenheimerHolmes}. Using Proposition~\ref{prop:tau0_invariantset}, it is clear that $\mathcal{D}_0$ and $\mathcal D \setminus \mathcal D_0$ are both invariant, and in $\mathcal D_0$ solutions tend to the DFE. In $\mathcal D \setminus \mathcal D_0$, dividing the system \eqref{sys:tau0} by $I$, we obtain
	\begin{align*}
	\frac{1}{I }S' & = \frac{d}{I}(1-S) -\beta S+\left(\gamma + \nu\beta \left(1-S-I\right)\right)\,=:f(S,I),\\[0.3em]
	\frac{1}{I}  I' & = \beta S -(\gamma+d)\,=:g(S,I).
	\end{align*}
	Then we compute the divergence of $f$ and $g$:
	\begin{displaymath}
	\mbox{div}(f,g)=\frac{\partial f}{\partial S}+\frac{\partial g}{\partial I}
	= -\frac{d}{I} -\beta -\nu\beta<0 \quad \mbox{for all}\quad (S,I)\in \mathcal D \setminus \mathcal D_0.
	\end{displaymath}
	The divergence remains negative for all $(S,I)\in \mathcal D \setminus \mathcal D_0$, hence we can exclude existence of periodic orbits in $\mathcal D \setminus \mathcal D_0$. Using this result together with the theorem of Poincar\'e-Bendixson, we have that all trajectories of the two dimensional ODE system \eqref{sys:tau0} in $\mathcal D \setminus \mathcal D_0$ converge to an equilibrium. Since $\liminf_{t\to \infty} I(t)>0$, it has to be the stable endemic equilibrium.
\end{proof}

\section{Stability}
\label{sec:stability}
In this section we consider linearized stability properties of the system \eqref{sys:newSISdelay}, considering first stability switches with respect to $\tau$ and then with respect to more parameters.

\subsection{Associated linear system}
We linearize the system \eqref{sys:newSISdelay} about an equilibrium point $(S^*,I^*)$, with $I^*\geq 0$, and introduce the variables $x(t)$ and $y(t)$ such that
$S(t)=S^*+x(t),\, I(t)=I^*+y(t)$. The conditions at equilibrium are given by
\begin{align}
0 & = d(1-S^*) -\beta I^*S^* +I^* \left(\gamma + \nu\beta \left(1-S^*-I^*\right)\right)e^{-\tau \left(d + \nu\beta I^*\right)}, \label{condi_equi1} \\[0.3em]
0 & = \beta I^*S^* -(\gamma+d) I^*.
\label{condi_equi2}
\end{align} 
Using the condition \eqref{condi_equi2}, we linearize the equation for $I$ in  \eqref{sys:newSISdelay} and obtain
\begin{equation*}
\dot y(t) 
= \beta I^*x(t)+(\beta S^*-\gamma-d)y(t).
 \label{lin_syst_equil_y} 
\end{equation*}
Linearization of the $S$-equation is less trivial. First we consider the exponential term
\begin{align*}
\exp \left( -d\tau - \nu\beta\int_{t-\tau}^{t}I(u)\,du\right) 
= e^{-\tau(d+ \nu\beta I^*)} \left(1- \nu\beta\int_{t-\tau}^{t} y(u)\,du\right).
\end{align*}
Some computations and the condition at equilibrium \eqref{condi_equi1} yield
\begin{align*}
\dot x(t) & = -\beta S^*y(t)-(d+ \beta I^*)x(t)\\
& \quad  -\nu\beta I^*e^{-\tau(d+ \nu\beta I^*)}x(t-\tau)\\
& \quad + \bigl( \gamma + \nu\beta (1-S^*-2 I^*)\bigr)e^{-\tau(d+ \nu\beta I^*)}y(t-\tau)\\
& \quad - \nu\beta I^* \bigl(\gamma + \nu\beta (1 -S^*-I^*)\bigr) e^{-\tau(d+ \nu\beta I^*)}\int_{t-\tau}^{t} y(u)\,du.
\end{align*}
Now we use the Ansatz $x(t)=x_0e^{\lambda t}$ and $y(t)=y_0e^{\lambda t}$, with $(x_0,y_0)\neq (0,0)$, 
\begin{align}
x_0\lambda e^{\lambda t} & = -\beta S^*y_0e^{\lambda t}-(d+ \beta I^*)x_0e^{\lambda t} \nonumber\\
& \quad  -\nu\beta I^*e^{-\tau(d+ \nu\beta I^*)}x_0e^{\lambda t}e^{-\lambda \tau}\nonumber\\
& \quad + y_0e^{\lambda t}e^{-\lambda \tau} \bigl( \gamma + \nu\beta (1-S^*-2 I^*)\bigr)e^{-\tau(d+ \nu\beta I^*)} \label{eq:char_form1x}\\
& \quad - I^* \bigl(\gamma + \nu\beta (1 -S^*-I^*)\bigr) e^{-\tau(d+ \nu\beta I^*)}
\left(\nu\beta\int_{0}^{\tau} y_0 e^{\lambda u}e^{\lambda t}e^{-\lambda \tau}\,du\right),\nonumber\\
y_0\lambda e^{\lambda t}  & = \beta I^*x_0e^{\lambda t}+(\beta S^*-\gamma-d)y_0e^{\lambda t}. \label{eq:char_form1y}
\end{align}\pagebreak 

\noindent The next statement will play an important role in our stability analysis.
\begin{lemma}
	\label{lemma:lambda0}
$\lambda=0$ is not a root of \eqref{eq:char_form1x}--\eqref{eq:char_form1y}.
\end{lemma}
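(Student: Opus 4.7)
The plan is to substitute $\lambda=0$ into \eqref{eq:char_form1x}--\eqref{eq:char_form1y} (after cancelling the common factor $e^{\lambda t}$) and to show that the resulting homogeneous linear system in $(x_0,y_0)$ admits only the trivial solution. Equivalently, I will show that the characteristic determinant evaluated at $\lambda=0$ is nonzero.

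First, I would examine \eqref{eq:char_form1y} at $\lambda=0$, which reduces to
\begin{equation*}
\beta I^*x_0+(\beta S^*-\gamma-d)\,y_0=0.
\end{equation*}
At an endemic equilibrium, \eqref{condi_equi2} forces $\beta S^*=\gamma+d$, so the equation becomes $\beta I^*x_0=0$; since $I^*>0$, this yields $x_0=0$. (At the disease-free equilibrium $\beta S^*-\gamma-d=(\gamma+d)(\r0-1)\neq 0$ whenever $\r0\neq 1$, so $y_0=0$; plugging this back into \eqref{eq:char_form1x} at $\lambda=0$ then gives $-d\,x_0=0$, hence $x_0=0$ as well.)

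Second, for the endemic case, I would substitute $x_0=0$ into \eqref{eq:char_form1x} at $\lambda=0$. Using $\int_{0}^{\tau}e^{\lambda u}\,du=\tau$ at $\lambda=0$, only a single scalar equation $M\,y_0=0$ remains, with
\begin{equation*}
M=-\beta S^*+E\bigl[\gamma+\nu\beta(1-S^*-2I^*)\bigr]-\nu\beta\tau I^*E\bigl[\gamma+\nu\beta(1-S^*-I^*)\bigr],
\end{equation*}
where $E:=e^{-\tau(d+\nu\beta I^*)}$. The crux is to show $M\neq 0$. For this I would exploit the equilibrium identity \eqref{condi_equi1}, which together with $\beta S^*=\gamma+d$ yields
\begin{equation*}
I^*E\bigl[\gamma+\nu\beta(1-S^*-I^*)\bigr]=I^*(\gamma+d)-d(1-S^*).
\end{equation*}
Writing $\gamma+\nu\beta(1-S^*-2I^*)=\bigl[\gamma+\nu\beta(1-S^*-I^*)\bigr]-\nu\beta I^*$, dividing the identity above by $I^*>0$, and letting the leading $-\beta S^*+(\gamma+d)$ cancel, a short computation collapses $M$ to
\begin{equation*}
M=-\frac{d(1-S^*)}{I^*}-E\nu\beta I^*-\nu\beta\tau\bigl[I^*(\gamma+d)-d(1-S^*)\bigr].
\end{equation*}

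Finally, I would observe that the bracket in the last term equals $I^*E[\gamma+\nu\beta(1-S^*-I^*)]\geq 0$ (since $R^*=1-S^*-I^*\geq 0$ by invariance of $\tilde\Omega$), so all three summands on the right are nonpositive. The first summand is in fact \emph{strictly} negative under $d>0$, because $S^*=1/\r0<1$ and $I^*>0$; hence $M<0$, forcing $y_0=0$, and the lemma follows. The main technical obstacle is the algebraic bookkeeping in the reduction of $M$ to three manifestly signed terms; once this simplification is carried out, the conclusion is immediate by inspection.
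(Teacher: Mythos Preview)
Your argument is correct and follows the same overall scaffold as the paper: first use \eqref{eq:char_form1y} at $\lambda=0$ to force $x_0=0$ (endemic case) or $y_0=0$ (DFE with $\r0\neq 1$), then show the remaining scalar coefficient of $y_0$ in \eqref{eq:char_form1x} is nonzero. The difference lies in how that last step is carried out. The paper rewrites the coefficient as $-e^{-\tau(d+\nu\beta I^*)}F(\tau)$ and argues $F(\tau)>0$ by checking $F(0)>0$ and $\partial F/\partial\tau>0$ with $I^*$ held fixed. You instead invoke the equilibrium identity \eqref{condi_equi1} to eliminate $E\bigl[\gamma+\nu\beta(1-S^*-I^*)\bigr]$, which collapses $M$ into three nonpositive summands whose signs are visible by inspection. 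Your route is more direct and avoids the auxiliary monotonicity argument; it also makes the role of the equilibrium relation explicit, which the paper's proof does not use at all in this lemma.

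Two small points. First, your strict inequality $M<0$ rests on $d>0$; the paper formally allows $d\geq 0$. If $d=0$ your second term $-E\nu\beta I^*$ is still strictly negative whenever $\nu>0$, so the conclusion survives except in the degenerate case $d=\nu=0$ (where in fact a continuum of equilibria appears and $\lambda=0$ genuinely is a root). You may want to note this explicitly. Second, the justification ``$R^*\geq 0$ by invariance of $\tilde\Omega$'' is adequate, but an alternative that stays entirely within the algebra is to observe that the bracket equals $I^*E\,\sigma(\nu)$ with $\sigma(\nu)=\gamma+\nu\beta(1-1/\r0-I^*)$, and $\sigma(\nu)\geq 0$ follows directly from $I^*\in[0,\rho/\kappa]$ in the construction of the endemic equilibrium (Proposition~1).
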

\begin{proof}
Consider first the DFE, $(S^*,I^*)=(1,0)$, 
and set $\lambda=0$ into \eqref{eq:char_form1x}--\eqref{eq:char_form1y}. Then the system reduces to
\begin{align*}
0 &= -\beta y_0-dx_0+y_0\gamma e^{-\tau d},\\
0 &= (\beta-\gamma-d)y_0.
\end{align*}
The last equation implies $y_0=0$, as in general $\beta\neq\gamma+d$. It follows that also $x_0=0$. But this contradicts the existence of a nontrivial solution of the linear system.\\
\ \\
Next we consider the endemic equilibrium ($S^*, I^*$), where $S^*=1/\r0=(\gamma+d)/\beta$ and $I^*>0$, and set $\lambda=0$ into \eqref{eq:char_form1y}. We obtain 
$$ 0  = \beta I^*x_0 \quad \Rightarrow \quad x_0=0.$$ 
Substitute $x_0=0$ in \eqref{eq:char_form1x} and find
\begin{align*}
0& = y_0\left(-\gamma-d + \bigl( \left(\gamma + \nu\beta (1-1/\r0- I^*)\right)(1-\nu\beta\tau I^*)  -\nu\beta I^* \bigr)e^{-\tau(d+ \nu\beta I^*)}\right).
\end{align*}
This can be written as
\begin{equation}
\label{eq:Ftau2}
0 = - y_0e^{-\tau(d+ \nu\beta I^*)}F(\tau),
\end{equation}
with $F:\R\to\R$ defined by
\begin{equation*}
\label{eq:F_tau}
F(\tau)= (\gamma+d)e^{\tau(d+ \nu\beta I^*)} + \left(\gamma + \nu\beta (1-1/\r0- I^*)\right)(\nu\beta\tau I^*-1)  +\nu\beta I^*.
\end{equation*}
This function satisfies
$$F(0)= d+\nu\beta I^*>0,$$ 
and 
$$\frac{\partial F}{\partial \tau}=(\gamma+d)(d+\nu\beta I^*)e^{\tau(d+ \nu\beta I^*)}+\nu\beta I^*\left(\gamma+\nu\beta (1-1/\r0-I^*)\right)>0,\quad \forall\tau\geq 0.$$
Hence $F$ is a strictly increasing function of $\tau.$ Therefore, $F(\tau)>0$ for all $\tau\geq 0,$ which implies 
that (\ref{eq:Ftau2}) holds only when $y_0=0,$ which is a contradiction to the existence of nontrivial solutions of the linear system. 
\end{proof}
Observe that
$$ \int_{0}^{\tau} e^{\lambda u}\,du 
= \left[\frac{e^{\lambda u}}{\lambda} \right]^{u=\tau}_{u=0} = \frac{e^{\lambda \tau}-1}{\lambda}, \qquad \mbox{for} \quad \lambda \neq 0.$$
Substitute this expression into \eqref{eq:char_form1x}--\eqref{eq:char_form1y}, and divide both equations by $e^{\lambda t}$. This yields
\begin{equation}
\begin{aligned}
x_0\lambda & = -\beta S^*y_0-(d+ \beta I^*)x_0\\
& \quad  -\nu\beta I^*e^{-\tau(d+ \nu\beta I^*)}x_0 e^{-\lambda \tau}\\
& \quad + y_0e^{-\lambda \tau} \bigl( \gamma + \nu\beta (1-S^*-2 I^*)\bigr)e^{-\tau(d+ \nu\beta I^*)}\\
& \quad - I^* \bigl(\gamma + \nu\beta (1 -S^*-I^*)\bigr) e^{-\tau(d+ \nu\beta I^*)}
\nu\beta y_0 \frac{1-e^{-\lambda \tau}}{\lambda},
\end{aligned}
\label{eq:char_x_1}
\end{equation}
respectively,
\begin{equation}
y_0\lambda  = \beta I^*x_0+(\beta S^*-\gamma-d)y_0.
\label{eq:char_y_1}
\end{equation}
\ \\
Multiply equation \eqref{eq:char_x_1} by $\lambda\neq 0$ and obtain
\begin{equation}
\begin{aligned}
x_0\lambda^2 & = -\lambda\left(\beta S^*y_0+(d+ \beta I^*)x_0\right)\\
& \quad  - \lambda e^{-\lambda \tau} \nu\beta I^*e^{-\tau(d+ \nu\beta I^*)}x_0\\
& \quad + y_0\lambda e^{-\lambda \tau} \bigl( \gamma + \nu\beta (1-S^*-2 I^*)\bigr)e^{-\tau(d+ \nu\beta I^*)}\\
& \quad - I^* \bigl(\gamma + \nu\beta (1 -S^*-I^*)\bigr) e^{-\tau(d+ \nu\beta I^*)}
\nu\beta y_0 \left(1-e^{-\lambda \tau}\right).
\end{aligned}
\label{eq:char_x_2}
\end{equation}

From equation \eqref{eq:char_y_1} we have that 
\begin{equation}
\frac{x_0}{y_0} = \frac{\lambda +d+\gamma- \beta S^*}{\beta I^*} = \frac{\lambda}{\beta I^*}+\frac{d+\gamma-\beta S^* }{\beta I^*},\label{eq:char_y_2}
\end{equation}
where $\lambda \neq 0$ is the solution of the characteristic equation determined by \eqref{eq:char_x_2}. 
Divide by $y_0$ and multiply by $\beta I^*$ equation \eqref{eq:char_x_2}, and substitute the expression \eqref{eq:char_y_2}. In this way we obtain 
\begin{equation}
\begin{aligned}
\lambda^2 \left(\lambda -\beta S^* +d+\gamma \right)
& = -\lambda \beta^2 I^* S^*-\lambda (d+ \beta I^*)(\lambda + d+\gamma-\beta S^*)\\
& \quad  - \lambda e^{-\lambda \tau} \nu \beta I^* e^{-\tau(d+ \nu\beta I^*)}(\lambda + d+\gamma-\beta S^*) \\
& \quad + \lambda e^{-\lambda \tau}  \beta I^* \bigl( \gamma + \nu\beta (1-S^*-2 I^*)\bigr)e^{-\tau(d+ \nu\beta I^*)}\\
& \quad - \nu(\beta I^*)^2  \bigl(\gamma + \nu\beta (1 -S^*-I^*)\bigr) e^{-\tau(d+ \nu\beta I^*)}\\
& \quad + e^{-\lambda \tau} \nu(\beta I^*)^2  \bigl(\gamma + \nu\beta (1 -S^*-I^*)\bigr) e^{-\tau(d+ \nu\beta I^*)}.
\end{aligned}
\label{eq:char_x_4}
\end{equation}
This is the characteristic equation we get from linearization about a generic stationary point $(S^*,I^*)$. There is no need to discuss further the DFE, as we already know it is globally asymptotically stable~(Theorem~\ref{thm:stabilityDFE}). Therefore we shall consider only the endemic equilibrium.

\subsection{Stability switches with respect to $\tau$}
The characteristic equation and its roots are functions of the delay $\tau$. The stability of an equilibrium solution (in the following, the endemic equilibrium) may change as the length of the delay changes \cite{Kuang1993,DiekmannRFDE}.\\
\ \\
The characteristic equation about the endemic equilibrium is
\begin{equation}
\begin{aligned}
\lambda^3  & = -\lambda \beta(\gamma+d) I^* -\lambda^2 (d+ \beta I^*) - \lambda^2 e^{-\lambda \tau} \nu \mu(\nu)\\
& \quad + \lambda e^{-\lambda \tau} \mu(\nu) \bigl( \sigma(\nu)  - \nu\beta I^*\bigr)
- \nu \beta I^* \sigma(\nu) \mu(\nu) + e^{-\lambda \tau} \nu\beta I^*  \sigma(\nu) \mu(\nu),
\end{aligned}
\label{eq:char_x_5}
\end{equation}
with
\begin{equation}
\begin{aligned} 
\mu(\nu) & = \beta I^* e^{-\tau(d+ \nu\beta I^*)},\\
\sigma(\nu) &=\gamma + \nu\beta (1 -1/\r0-I^*).
\end{aligned}
\label{def:mu_sigma}
\end{equation}
The next proposition shows that characteristic roots (as continuous functions of $\tau$) are bounded on the right half plane. Hence, Rouch\'e's theorem \cite{Dieudonne1960} implies that roots $\lambda(\tau)$ cannot suddenly appear or disappear, nor they can change multiplicity at a finite point in the complex plane.\\
\begin{prop}[Bounded characteristic roots]
	If $\Re(\lambda)>0$, then
	 $$|\lambda|\leq \max \left\{1, (\hat a+\hat b+\hat c)\right\},$$ 
	 where $\hat a=d+\beta(\nu+1)$, $\hat b=\beta(\gamma+\beta(2\nu+1))$, and $\hat c=2\nu\beta^2(\gamma+\nu\beta)$.
\end{prop}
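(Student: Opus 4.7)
The plan is to obtain the bound directly from the characteristic equation~\eqref{eq:char_x_5} by taking moduli and exploiting the elementary inequality $|e^{-\lambda\tau}|=e^{-\tau\Re(\lambda)}\leq 1$, valid whenever $\Re(\lambda)\geq 0$. The strategy is to show that the right-hand side of~\eqref{eq:char_x_5} is dominated in modulus by a quadratic polynomial in $|\lambda|$ with the explicit coefficients $\hat a,\hat b,\hat c$. Once the cubic inequality $|\lambda|^3\leq \hat a|\lambda|^2+\hat b|\lambda|+\hat c$ is established, the conclusion splits into two trivial cases: if $|\lambda|\leq 1$, the claim is immediate; if $|\lambda|>1$, dividing the cubic inequality by $|\lambda|^2$ yields $|\lambda|\leq \hat a+\hat b/|\lambda|+\hat c/|\lambda|^2\leq \hat a+\hat b+\hat c$.

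To pin down the three coefficients I would first control the auxiliary quantities $\mu(\nu)$ and $\sigma(\nu)$ introduced in~\eqref{def:mu_sigma}. The positive invariance of $\tilde\Omega$ (Theorem~\ref{thm:exiunipos}) gives $I^*\leq 1$; since the exponent in $\mu$ is nonnegative, one obtains $|\mu(\nu)|\leq \beta I^*\leq \beta$, and similarly $|\sigma(\nu)|\leq \gamma+\nu\beta$ and $|\sigma(\nu)-\nu\beta I^*|\leq \gamma+2\nu\beta$. The less obvious ingredient, and the main subtlety I anticipate, is that a naive estimate of the coefficient of $|\lambda|$ comes out larger than the stated $\hat b$; to recover the advertised constant I would invoke the endemic relation $\beta S^*=\gamma+d$ together with $S^*=1/\r0\leq 1$, which yields $\gamma+d\leq \beta$ and hence $\beta(\gamma+d)I^*\leq \beta^2$.

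Collecting terms of~\eqref{eq:char_x_5} by their power of $|\lambda|$ then produces the desired coefficients: the quadratic coefficient is bounded by $(d+\beta I^*)+\nu|\mu(\nu)|\leq d+\beta(1+\nu)=\hat a$; the linear coefficient by $\beta(\gamma+d)I^*+|\mu(\nu)|\,|\sigma(\nu)-\nu\beta I^*|\leq \beta^2+\beta(\gamma+2\nu\beta)=\hat b$; and the constant term by $2\nu\beta I^*|\sigma(\nu)||\mu(\nu)|\leq 2\nu\beta^2(\gamma+\nu\beta)=\hat c$, where the factor $2$ accounts for the two constant-in-$\lambda$ summands in~\eqref{eq:char_x_5} (one with and one without $e^{-\lambda\tau}$). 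Apart from this bookkeeping and the observation that $\r0\geq 1$ at the endemic equilibrium, the argument is essentially mechanical.
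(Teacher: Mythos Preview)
Your proposal is correct and follows essentially the same route as the paper: take moduli in the characteristic equation~\eqref{eq:char_x_5}, use $|e^{-\lambda\tau}|\leq 1$ together with $I^*\leq 1$ and $\gamma+d\leq\beta$ to obtain the cubic inequality $|\lambda|^3\leq \hat a|\lambda|^2+\hat b|\lambda|+\hat c$, then dispose of the two cases $|\lambda|\leq 1$ and $|\lambda|>1$. Your version is in fact more explicit than the paper's (which compresses the bookkeeping into two displayed lines), and you correctly single out the only nontrivial step, namely the use of $\r0>1$ to bound $\beta(\gamma+d)I^*$ by $\beta^2$.
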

\begin{proof}
Assume $\Re(\lambda)>0$. If $|\lambda|> 1$, from \eqref{eq:char_x_5} we have the estimate (recall that $I^*\leq 1$)
\begin{align*}
|\lambda|^3  & \leq  |\lambda| \beta^2 +\lambda^2 (d+ \beta(\nu+1) ) + |\lambda| \beta (\gamma+2\nu\beta) 
+ 2\nu \beta^2(\gamma+\nu\beta)\\
&  \leq \lambda^2 \bigl[d+ \beta(\nu+1) + \beta(\beta+ (1+2\nu\beta)(\gamma+2\nu\beta) )\bigr].
\end{align*}
Hence it either holds $|\lambda|\leq 1$, or $|\lambda|>1$ is not larger than the term in the square parenthesis.
\end{proof}
In general, when dealing with equations with one (constant) delay, one is interested in studying stability switches when $\tau$ increases, and looks for the first value $\tau_0$ at which the characteristic equation has a pair of pure imaginary conjugate roots. In our case, the characteristic equation about the endemic equilibrium, $(1/\r0,I^*)$,  
can be written in the form
$$\underbrace{P(\lambda) +Q(\lambda) e^{-\lambda \tau}}_{W(\lambda)}=0,$$
where
\begin{align*}
P(\lambda) & = \lambda^3 + \lambda^2 (d+ \beta I^*)+ \lambda \beta (\gamma+d)I^*+ \nu\beta I^*\mu(\nu)\sigma(\nu),\\[0.2em]
Q(\lambda) & =  \bigl(\lambda^2  \nu  - \lambda  \bigl( \sigma(\nu)  - \nu\beta I^*\bigr)- \nu \beta I^*  \sigma(\nu)\bigr) \mu(\nu),
\end{align*}
and $\mu(\nu),\,\sigma(\nu)$ as in \eqref{def:mu_sigma}. Consider purely imaginary roots, $\lambda = i\omega$, with $\omega>0$, and define $ W(i\omega)=P(i\omega) +Q(i\omega) e^{-i\omega \tau}$. Separate real and imaginary part of $W(i\omega)$, square both terms and add them together. The result is a  quadratic equation,
\begin{equation*}
\xi^2+ a_2\xi +a_1=0,\qquad \xi=\omega^2,
\label{eq:char_PQ_4a}
\end{equation*}
where 
\begin{align*}
a_2 & = \bigl( d^2+ \beta^2 (I^*)^2 - 2\beta\gamma I^* -\nu^2  \mu^2(\nu)\bigr),\\
a_1 & =  (\beta I^*(\gamma+d))^2 -2\nu \beta I^*(d+ \beta I^*)\mu(\nu) \sigma(\nu)\\
& \qquad  -(\sigma(\nu)-\nu\beta I^*)^2\mu^2(\nu)-2 \nu^2  \mu^2(\nu) \beta I^*  \sigma(\nu).
\end{align*}
This equation can have zero, one, or two solutions $\xi$, corresponding to zero, two or four roots $\omega=\pm \sqrt{\xi}$ of $W(i\omega)$. In order to analytically determine stability switches, one usually studies the sign of $d(\Re(\lambda))/d\tau$ at purely imaginary roots $\lambda=i\omega$, or equivalently the sign of 
$\Re( d(\lambda/d\tau)^{-1})$. In our case however, due to the complicated expression \eqref{eq:char_x_5} in which several coefficients, such as $I^*$, $\mu(\nu)$ or $\sigma(\nu)$, depend on the delay, it is not really possible to study $d(\Re(\lambda))/d\tau$ in the general case.

\subsection{Stability with respect to two parameters}
In order to compute regions of stability in a parameter plane, say $(\nu,\tau)$, the classical technique is to separate real and imaginary part of the characteristic equation, substituting $\lambda=x+iy$ in \eqref{eq:char_x_4}, and then obtain an explicit expression for $\nu$ and $\tau$ as a function of the imaginary part $y$. In this way, it is usually possible to have the parametric formulation of curves on which a pair of roots is exactly on the imaginary axis.\\
\ \\
In the case of equation~\eqref{eq:char_x_5}, with $\r0>1$, the characteristic equation at the endemic equilibrium, several coefficients depend on the $I^*$ coordinate of the endemic equilibrium, which in turn depends on all model parameters, including $\nu$ and $\tau$. Therefore it is not possible to solve explicitly for $\nu$ or $\tau$ (or any other parameter). Nevertheless, thanks to the results in the previous sections, we know the stability properties along the $\nu$ and $\tau$ axes. On the one side, if $\tau=0$ (and $\r0>1$) the endemic equilibrium is globally asymptotically stable for all $\nu\geq 0$. On the other side, if $\nu=0$ we know that a number of stability switches occur in $\tau$, and there is a value $\tau_0>0$ at which the first Hopf-bifurcation occurs, so that the endemic equilibrium is locally asymptotically stable for $\tau \in [0,\tau_0)$. We perform a few numerical test with TRACE-DDE~\cite{tracedde} to determine the stability of the endemic equilibrium with respect to two parameters. \\
\ \\
In the following we consider parameter values which are plausible for pertussis disease. Pertussis is a highly transmittable disease ($\r0=15$) with about 21 days infectious period ($\gamma=17$) \cite{CDCpertussis}. Unless otherwise mentioned, we assume turnover an average life time of 50 years ($d=0.02$).\\
\ \\
Figure \ref{Fig1ab}~(a) shows the stability of the endemic equilibrium in the parameter plane ($\nu,\tau$). Notice that the coordinates of the endemic equilibrium change in dependence of the parameters, hence they have to be computed for each parameter pair ($\nu,\tau$). In the above section we have assumed $\nu \in [0,1]$, here for the sake of numerical interest we investigate the stability chart for $\nu\geq 0$. The stable region (green) is the one in which all the characteristic roots have real part smaller than zero. The unstable region (red) indicates parameter combinations for which at least one characteristic root has positive real part. On the curves which separates stable and unstable regions one or more roots are crossing the imaginary axis. We see that increasing $\nu$ has a stabilizing effect on the endemic equilibrium. \\
\ \\
In Fig. \ref{Fig2abcd} we study effects of the mortality rate on the left lower part of the $(\nu,\tau)$ plane. We see that increasing the mortality stabilizes the endemic equilibrium in the sense that the unstable region becomes smaller. This matches previous results on an ODE model with waning immunity and boosting by Dafilis et~al.~\cite{Dafilis2012}.\\
\ \\
Figure~\ref{Fig1ab}~(b) shows the stability of the endemic equilibrium in the parameter plane ($\nu,\r0$), for $\nu \in [0,6]$ and $\r0 \in [1.05,10]$. We observe that for $\nu$ close to $4.5$ there are four stability switches in $\r0$, that is the endemic equilibrium is locally asymptotically stable for value of $\r0$ close to 1, then it becomes unstable, then stable, then unstable and finally again stable. Studying the characteristic roots related to these switches, we find that when the endemic equilibrium loses stability it is due to a Hopf bifurcation (Fig.~\ref{Fig:pertussis_nu_R0_specturm}).\pagebreak

\begin{figure}[h]
\centering
 \includegraphics[width=0.85\textwidth]{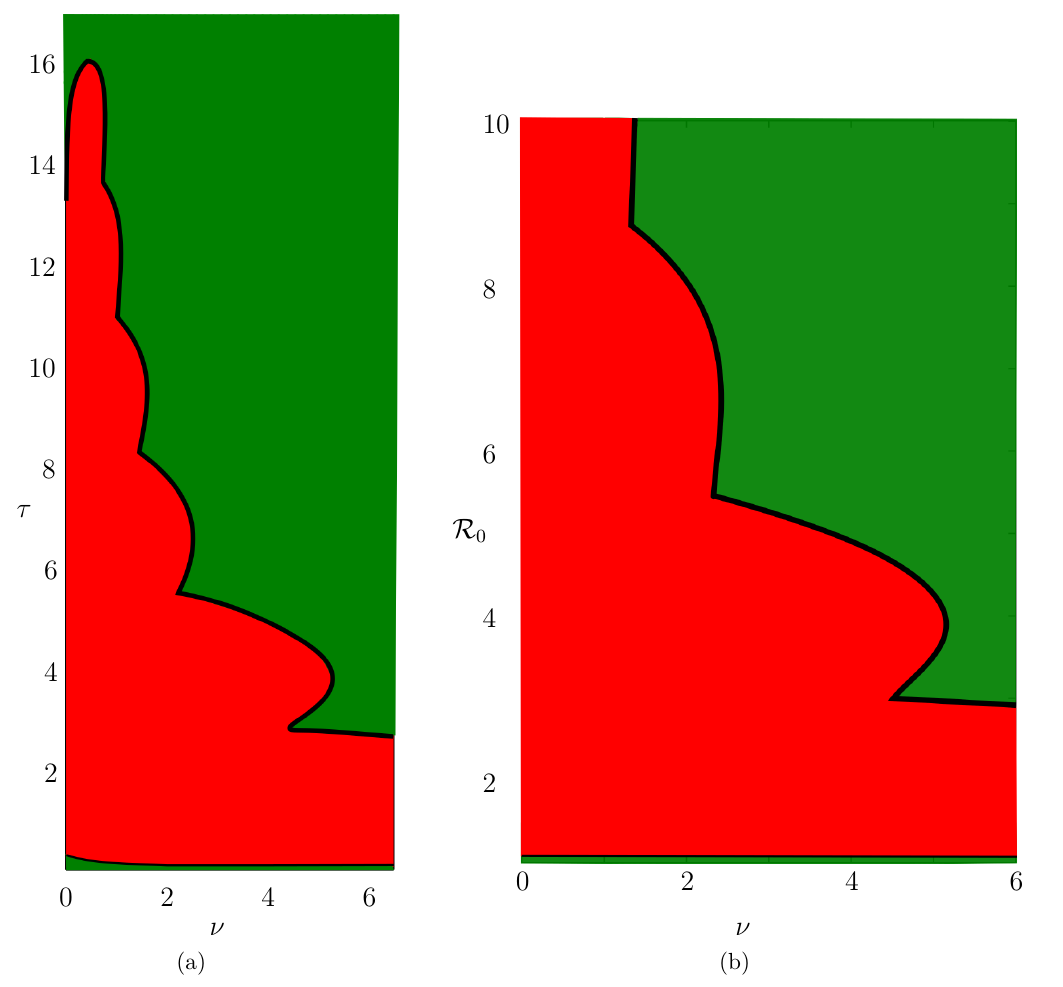}
\caption{Stability of the endemic equilibrium in a two parameter plane. Green region indicates stability, red one instability. (a) Parameter plane ($\nu,\tau$) in the pertussis parameter setting  ($\r0=15$, $\gamma=17$) with population turnover $d=0.02$ (50 years average life time). (b) Parameter plane ($\nu,\r0$) for $\tau=15$, $\gamma=17$ and $d=0.02$.}
 \label{Fig1ab}
\end{figure}

\begin{figure}[h]
\centering
\includegraphics[width=0.85\textwidth]{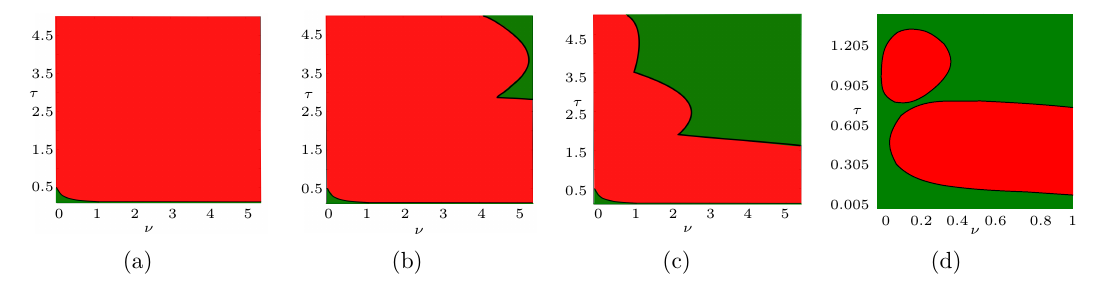}
\caption{Pertussis parameter setting  ($\r0=15$, $\gamma=17$). Effects of the mortality rate on the parameter plane $(\nu,\tau)$. (a) $d=0.013$ corresponding to 75 years, (b) $d=0.02$ corresponding to 50 years, (c) $d=0.05$ corresponding to 20 years, (d) $d=0.2$ corresponding to 5 years average life time. Green region indicates stability, red one instability.}
 \label{Fig2abcd}
\end{figure}

\begin{figure}[h]
\centering
 \includegraphics[width=0.85\textwidth]{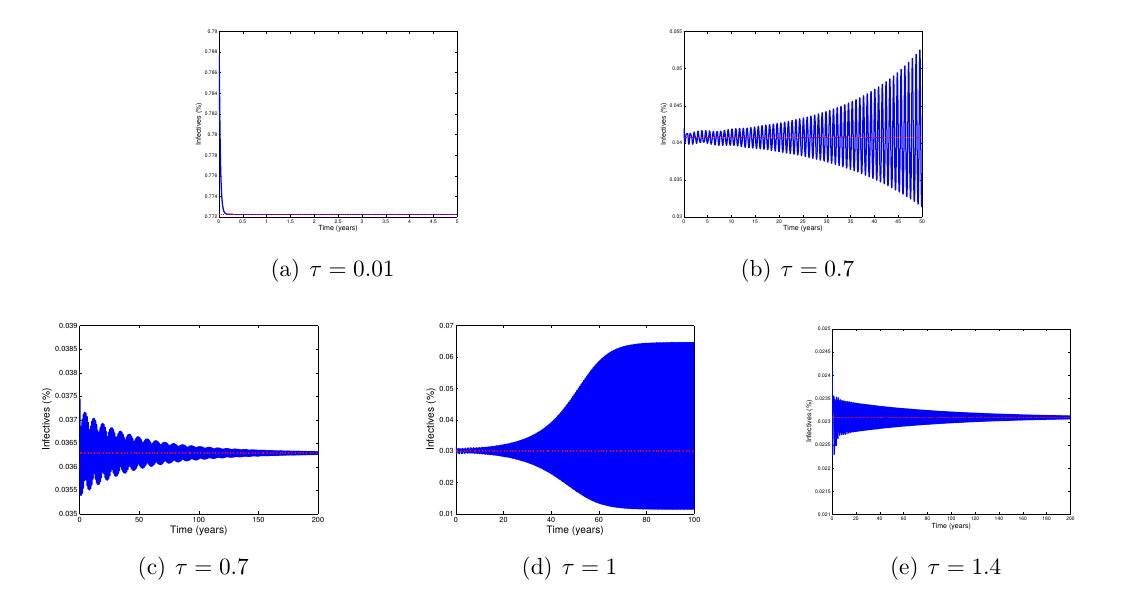}
\caption{Stability switches in dependence of $\tau$. The blue curve shows the infective population $I(t)$, the red dotted line indicates the endemic equilibrium $I^*$. Parameter values correspond to Figure~\ref{Fig2abcd}(d) ($\r0=15$, $\gamma=17$, $d=0.2$) and the delay varies: (a) $\tau=0.01$, (b) $\tau=0.7$, (c) $\tau=0.8$, (d) $\tau=1$ (e) $\tau=1.4$.}
 \label{Fig3abcde}
\end{figure}

\begin{figure}[h]
\centering
 \includegraphics[width=0.85\textwidth]{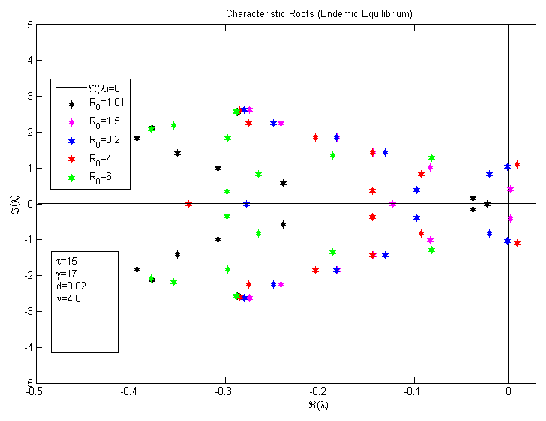}
\caption{Spectrum of the endemic equilibrium when $\nu=4.8$, $d=0.02$, $\tau=15$, $\gamma=17$ and $\r0$ varies. For $\r0=1.01$, $\r0=3.2$ and $\r0=6$ there is no characteristic root on the right half-plane, whereas for $\r0=1.5$ and $\r0=4$ a pair of characteristic roots has crossed the imaginary axis. }
 \label{Fig:pertussis_nu_R0_specturm}
\end{figure}

\noindent We conclude with a short remark on the disease free equilibrium. The characteristic equation at the DFE is 
\begin{equation*}
\lambda^2 \left(\lambda -\beta +d+\gamma \right) = -\lambda d (\lambda + d+\gamma-\beta).
\end{equation*}
As $\lambda=0$ is not a root (Lemma \ref{lemma:lambda0}),  we divide by $\lambda$ left and right, obtaining
\begin{equation*}
\lambda^2 +\lambda\beta\left( d+1/\r0-1 \right) +d\beta(1/\r0-1)=0.
\end{equation*}
Hence the stability of the DFE does not depend on $\nu$, nor on $\tau$, but only on $\r0$. From Theorem~\ref{thm:stabilityDFE} we know that the trivial equilibrium is globally asymptotically stable for $\r0\leq 1$.\pagebreak

\section{Discussion}
 Several models for waning immunity in the form of DDE systems with constant or distributed delay have been proposed in the past few years \cite{Kyrychko2005,Taylor2009,Blyuss2010,Bhat2012,Belair2013}. None of such models, however, includes immune system boosting.\\
 \ \\ 
\noindent In this work we have introduced the model \eqref{sys:newSISdelay} for waning and boosting immunity, written as a system of two differential equations with constant and distributed delay.
The delay $\tau$ represents the duration of immunity after natural infection. One limitation of the proposed model is the assumption that the infectious period is constant. In the future it might be interesting to extend the system including a further distributed delay for variable recovery.\\
\indent For the system~\eqref{sys:newSISdelay} we have proved global existence and uniqueness of solutions on $\Omega \subset C([-\tau,0],[0,1]^2)$. As it often happens in applications of delay differential equations, the solutions of our system can become negative. Non-negative solutions can be obtained by restricting  the choice of possible initial data to an appropriate set $\tilde \Omega \subset \Omega$ in \eqref{eq:tildeOmega} (cf. Theorem~\ref{thm:exiunipos}).\\
\indent Classical analysis of epidemiological models includes the determination of the basic reproduction number $\r0$, a parameter which indicates if and how strongly the disease spreads among the population. For system~\eqref{sys:newSISdelay} $\r0$ is given by the relation~\eqref{def:r0}. We proved that if $\r0$ is larger than one, the system has one unique endemic equilibrium and the disease persists in the population; if $\r0\leq 1$ then the only biologically relevant equilibrium is the DFE and it is globally asymptotically stable. In the limit case $\tau=0$ we get a SIS system with no immunity. The ODE system~\eqref{sys:tau0} has the same reproduction number as \eqref{sys:newSISdelay}. Also in this case the value of $\r0$ determines global stability of the disease free equilibrium (when $\r0\leq 1$) or of the unique endemic equilibrium (when $\r0>1$).\\
\indent We have investigated stability switches of the endemic equilibrium with respect to $\tau>0$, but we have seen that in general it is not possible to study the sign of the characteristic roots with respect to the delay. On the other hand we see in Fig.~\ref{Fig1ab}~(a) and Fig.~\ref{Fig3abcde} that increasing $\tau$, the endemic equilibrium first loses and then regains stability. Our conjecture is that when the delay is large the system~\eqref{sys:newSISdelay} approximates a classical ODE SIR model: immune hosts are protected for a very long time, wiping out the effect of immune system boosting.\pagebreak

\noindent For biological motivation might make sense to consider the boosting force $\nu\in [0,1]$. Observe that if $\nu$ is very large ($\nu\to \infty$), the exponential term in the first equation \eqref{sys:newSISdelay} tends to zero and the dynamics approximates the one of a SIR system without delay. Indeed, numerical simulations in Fig.~\ref{Fig3abcde} show that increasing $\nu$ has a stabilizing effect on the endemic equilibrium. But $\nu$ is not the only parameter which affects the stability region of the nontrivial steady state.
Studying the parameter plane $(\nu,\tau)$, we have found that increasing the mortality rate $d$ stabilizes the endemic equilibrium, in accordance with results in~\cite{Dafilis2012}. For large values of the mortality rate (Fig.~\ref{Fig2abcd}~(d)), the unstable Hopf-bifurcation regions in the $(\nu,\tau)$ plane are (red) spots well separated from each other. Decreasing $d$, each of these spots get larger and different unstable regions might overlap, generating a unique large unstable region with few curves along which a double Hopf-bifurcation occurs (Fig.~\ref{Fig2abcd}~(a-c)). Due to the complicated form of the characteristic equation, including an implicitly determined endemic equilibrium, it was not possible to determine the explicit expression of the Hopf-bifurcation curves in the parameter plane $(\nu,\tau)$ nor in other parameter planes.\\
\indent In the last part of the manuscript, we have constructed a $(\nu,\r0)$ stability chart for the nontrivial steady state. Increasing $\r0$, the endemic equilibrium can experience several stability switchings crossing two distinct regions of instability separated by Hopf-bifurcations. We believe this is a novel bifurcation diagram in epidemic context, which leads the path for further numerical investigations and for careful mathematical analysis.

 \begin{acknowledgements}
MVB is supported by the European Social Fund and by the Ministry of Science, Research and Arts Baden-W\"urttemberg. MP is supported by the Hungarian Scientific Research Fund, Grant No. K109782. GR is supported by the ERC Starting Grant No. 259559.
 \end{acknowledgements}

\end{document}